\numberwithin{equation}{section} 
\newtheorem{thm}{Theorem}[subsection] 
\newtheorem{lem}[thm]{Lemma}
\newtheorem{pro}[thm]{Proposition}
\theoremstyle{definition} 
\theoremstyle{definition} 
\DeclareMathOperator{\rank}{rank}
\DeclareMathOperator{\Hom}{Hom}
\DeclareMathOperator{\Ext}{Ext}
\DeclareMathOperator{\Int}{Int}
\DeclareMathOperator{\Ad}{Ad}
\DeclareMathOperator{\Aut}{Aut}
\DeclareMathOperator{\Card}{Card}
\DeclareMathOperator{\K}{K}
\DeclareMathOperator{\codim}{codim}
\DeclareMathOperator{\RHom}{RHom}
\begin{document}
\title[Kazhdan's orthogonality conjecture]{Kazhdan's orthogonality conjecture \\for real reductive groups}

\author[J.-S. Huang]{Jing-Song Huang}
\address{Department of Mathematics, Hong Kong University of Science and
technology, Clear Water Bay, Kowloon, Hong Kong SAR, China}
\email{mahuang@ust.hk}

\author[D. Mili\v ci\' c]{Dragan Mili\v ci\'c}
\address{Department of Mathematics, University of Utah, Salt Lake City, UT 84112, USA}
\email{milicic@math.utah.edu}
  
\author [B. Sun]{Binyong Sun}
\address{Institute of Mathematics and Hua Loo-Keng Key Laboratory of Mathematics, AMSS, Chinese Academy of Sciences, Beijing, 100190, China}
\email{sun@math.ac.cn}

\thanks{J.-S.~Huang was supported by grants from
  Research Grant Council of HKSAR and National Science Foundation of China.
  B.~Sun was  supported by NSFC Grants 11525105, 11321101, and 11531008.}

\begin{abstract}
  We prove a generalization of Harish-Chandra's character orthogonality
  relations for discrete series to arbitrary Harish-Chandra modules for
  real reductive Lie groups. This result is an analogue of a
  conjecture by Kazhdan for $\mathfrak p$-adic reductive groups
  proved by Bezrukavnikov, and Schneider and Stuhler.
\end{abstract}

\subjclass[2010]{22E46}

\keywords{Kazhdan's orthogonality conjecture, real reductive group,
  Harish-Chandra module, Euler-Poincar\'{e} pairing, elliptic
  pairing.}

\maketitle

\section*{Introduction}
Let $G_0$ be a connected compact Lie group. Denote by $\mathcal
M(G_0)$ the category of finite-dimensional representations of $G_0$.
Then $\mathcal M(G_0)$ is abelian and semisimple. Denote by $\K(G_0)$
its Grothendieck group. Let $U$ and $U'$ be two finite-dimensional
representations of $G_0$. Denote by $\Hom_{G_0}(U,U')$ the complex
vector space of intertwining maps between representations $U$ and
$U'$. Then the map $(U, U') \longmapsto \dim \Hom_{G_0}(U,U')$ extends
to a biadditive pairing on $\K (G_0)$, which we call the multiplicity
pairing.

For a finite-dimensional representation $U$ of $G_0$, we denote by
$\Theta_U$ its character. Let $\mu_{G_0}$ be the normalized Haar
measure on $G_0$. Then the map
$$
(U, U') \longmapsto
\int_{G_0} \Theta_U(g) \overline{\Theta_{U'}(g)} \, d \mu_{G_0}(g)
$$
extends to another pairing on $\K (G_0)$. The Schur orthogonality
relations for characters of irreducible representations imply that
these two pairings are equal.

Let $T_0$ be a maximal torus in $G_0$. Denote by $\mathfrak g$ and
$\mathfrak t$ the complexified Lie algebras of $G_0$ and $T_0$
respectively. Let $R$ be the root system of the pair $(\mathfrak
g,\mathfrak t)$. Let $W$ be the Weyl group of $R$ and $[W]$ its order.

For any root $\alpha \in R$ define by $e^\alpha$ the
corresponding homomorphism of $T_0$ in the group of complex numbers of
absolute value $1$. Let
$$
D(t) = \prod_{\alpha \in R} (1 - e^\alpha(t))
$$
for any $t \in T_0$. Let $\mu_{T_0}$ be the normalized Haar measure
on $T_0$. Then we have the Weyl integral formula
$$
\int_{G_0} f(g) \, d\mu_{G_0}(g)
= \frac 1 {[W]} \int_{T_0}\left(\int_{G_0} f(g t g^{-1}) \, d\mu_{G_0}(g) \right)
D(t) \, d \mu_{T_0}(t)
$$
for any continuous function $f$ on $G_0$. In particular, this
implies that the above pairing is given by
$$
(U, U') \longmapsto \frac 1 {[W]}
\int_{T_0} \Theta_U(t) \overline{\Theta_{U'}(t)} D(t) \, d \mu_{T_0}(t).
$$
The equality of the above pairings was used by Hermann Weyl to
determine the formulas for the characters of irreducible
finite-dimensional representations of $G_0$ on $T_0$.

Assume now that $G_0$ is a noncompact connected semisimple Lie group
with finite center. Let $K_0$ be its maximal compact subgroup.  Assume
that the ranks of $G_0$ and $K_0$ are equal. In his work on discrete
series representations, Harish-Chandra generalized the latter
construction and defined an analogue of the pairing in this
situation. This is the elliptic pairing we discuss in Section
\ref{elliptic}. He also proved an analogue of Schur orthogonality
relations for characters of discrete series \cite{ds2}. Since the
category of square-integrable representations is semisimple, this
leads to an analogue of the above statement about equality of pairings
on its Grothendieck group.

Kazhdan discussed the elliptic pairing in the setting of
representation theory of $\mathfrak p$-adic reductive groups
\cite{kazhdan}. He conjectured an analogue of the equality of two
pairings in that setting. His conjecture was proved independently by
Bezrukavnikov \cite[Thm.~ 0.20]{bez} and Schneider and Stuler
\cite[Theorem, III.4.21]{schst}.

In this note we prove a generalization of Harish-Chandra statement for
arbitrary representations of real reductive groups. It is the exact
equivalent of Kazhdan's conjecture for real groups. The proof is
mostly formal in nature. To deal with nonsemisimplicity of the
category of representations, we replace it with its derived category.
This allows to define the analogue of the multiplicity pairing in this
setting. In Section \ref{homological}, we construct this pairing and
call it the homological pairing. Since the Grothendieck group is
generated by cohomologically induced representations, a Frobenius
reciprocity result proved in Section \ref{frobenius},
reduces the calculation of this pairing to calculation of Lie algebra
homology of nilpotent radicals of Borel subalgebras containing a
compact Cartan subalgebra in the Lie algebra of $G_0$. Finally, the
Osborne conjecture \cite{osborne} implies the equality of homological
and elliptic pairing.

Two of the authors (Huang and Sun) would like to thank Gregg Zuckerman
for some insightful comments.

\section{elliptic pairing}
\label{elliptic}
\subsection{Groups of Harish-Chandra class}
Let $G_0$ be a Lie group with finitely many connected components. Let
$\mathfrak g$ be the complexified Lie algebra of $G_0$. Assume that
$\mathfrak g$ is reductive.

Denote by $\Aut(\mathfrak g)$ the group of automorphisms of $\mathfrak
g$ and $\Ad : G_0 \longrightarrow \Aut(\mathfrak g)$ the adjoint
representation of $G_0$. Let $\Int(\mathfrak g)$ be the subgroup of
inner automorphisms.

Let $G_1$ be the derived subgroup of the identity component of $G_0$.

We say that the group $G_0$ is of {\em Harish-Chandra class} (see, for example,
\cite{ha1}, \cite[II.1]{var}) if the following properties hold:
\begin{enumerate}
\item[(HC1)]
  $\Ad(G_0) \subset \Int(\mathfrak g)$;
\item[(HC2)]
  The center of $G_1$ is finite.
\end{enumerate}

Fix a maximal compact subgroup $K_0$ of $G_0$. Let $K$ be the
compexification of $K_0$. Then $K$ is a reductive complex algebraic
group. Let $\mathfrak k \subset \mathfrak g$ be the complexified Lie
algebra of $K_0$.

\subsection{Categories of $(\mathfrak g,K)$-modules}
Fix a group $G_0$ of Harish-Chandra class and a maximal compact
subgroup $K_0$ of $G_0$. Denote by $\mathcal M(\mathfrak g, K)$ the
category of objects $(\pi, V)$ which are $\mathcal U(\mathfrak
g)$-modules and algebraic representations of $K$ on $V$, and these actions
$\pi$ are compatible, i.e.,
\begin{enumerate}
\item[(i)]
  the actions of $\mathfrak k$ as subset of $\mathcal U(\mathfrak g)$ and
  as differential of the action of $K$ agree; and
\item[(ii)]
$$
\pi(k)\pi(\xi)\pi(k^{-1}) = \pi(\Ad(k) \xi)
$$
for $k \in K$ and $\xi \in \mathfrak g$.
\end{enumerate}

The objects in this category are $(\mathfrak g,K)$-modules. The
morphisms are the linear maps intertwining the actions of $\mathcal
U(\mathfrak g)$ and $K$. For any two $(\mathfrak g,K)$-modules $U$ and
$V$ we denote by $\Hom_{(\mathfrak g,K)}(U,V)$ the complex vector
space of all morphisms of $U$ into $V$. Clearly, $\mathcal M(\mathfrak
g, K)$ is an abelian category.

We denote by $\mathcal A(\mathfrak g,K)$ the full subcategory of
$\mathcal M(\mathfrak g,K)$ consisting of all $(\mathfrak
g,K)$-modules of finite length. The objects in $\mathcal A(\mathfrak
g,K)$ are {\em Harish-Chandra modules}.

Let $V$ be a $(\mathfrak g,K)$-module. Since $K$ is reductive, $V$ is
a direct sum of irreducible finite-dimensional representations of $K$.
We say that $V$ is an {\em admissible} $(\mathfrak g,K)$-module if
$\Hom_K(F,V)$ is finite-dimensional for any finite-dimensional
irreducible representation $F$ of $K$. By a classical result of
Harish-Chandra, any irreducible $(\mathfrak g,K)$-module is
admissible. Hence, any Harish-Chandra module is admissible.

Let $V$ be a Harish-Chandra module. Denote by $V\check{\ }$ the
$K$-finite dual of $V$ equipped with the adjoint action of $\mathfrak
g$ and $K$. Then $V\check{\ }$ is the {\em dual} of $V$. The functor
$V \longmapsto V\check{\ }$ is an involutive antiequivalence of the
category $\mathcal A(\mathfrak g,K)$.

Let $\K (\mathfrak g,K)$ be the Grothendieck group of $\mathcal
A(\mathfrak g,K)$. For any $U$ in $\mathcal A(\mathfrak g,K)$, we
denote by $[U]$ the corresponding element of $\K(\mathfrak g,K)$.

To a Harish-Chandra module $V$, Harish-Chandra attaches its {\em
  character} $\Theta_V$ which is a distribution on $G_0$. The map $V
\longmapsto \Theta_V$ factors through $\K (\mathfrak g,K)$. Hence, we
can also denote by $\Theta_{[V]}$ the character of the element $[V]$
of $K(\mathfrak g,K)$. Clearly, $[V] \longmapsto \Theta_{[V]}$ is a
homomorphism of $\K(\mathfrak g,K)$ into the additive group of
distributions on $G_0$. A well known regularity theorem of
Harish-Chandra states that the distribution $\Theta_{[V]}$ is given by
a locally integrable function which is real analytic on the set of
regular elements in $G_0$. Abusing the notation, we denote it by the
same letter. More precisely we have
$$
\Theta_{[V]}(f) = \int_{G_0} \Theta_{[V]}(g) f(g) \, d\mu_{G_0}(g)
$$
for any compactly supported smooth function $f$ on $G_0$.

\subsection{Weyl integral formula for the elliptic set}
Assume that the rank of $G_0$ is equal to the rank of $K_0$. Let $T_0$
be a Cartan subgroup of $K_0$. Then $T_0$ is also a Cartan subgroup in
$G_0$. An element $g \in G_0$ is {\em elliptic} if $\Ad(g)$ is
semisimple and its eigenvalues are complex numbers of absolute value
$1$. Denote by $E$ the set of all regular elliptic elements in
$G_0$. Also we denote by $T_0'$ the set of regular elements in
$T_0$. Clearly, $E$ is an open set in $G_0$, invariant under
conjugation by elements of $G_0$ and every conjugacy class in $E$
intersects $T_0'$.  Let $\mu_{G_0}$ be a Haar measure on $G_0$. Then
there exists a unique positive measure $\nu$ on $T_0$ such that
$$
\int_E f(g) \, d\mu_{G_0}(g)
= \int_{T_0} \left( \int_{G_0} f(g t g^{-1}) \, d \mu_{G_0}(g) \right) \, d\nu(t)
$$
for any compactly supported continuous function $f$ on $G_0$. It is
evident that the measure $\nu$ does not depend on the choice of Haar
measure $\mu_{G_0}$.

Let $\mathfrak t$ be the complexified Lie algebra of $T_0$. Denote by
$R$ the root system of $(\mathfrak g,\mathfrak t)$ in $\mathfrak
t^*$. For any root $\alpha$ in $R$ we denote by $e^\alpha$ the
corresponding homomorphism of $T_0$ into the group of complex numbers
of absolute value $1$.

We put
$$
D(t) = \prod_{\alpha \in R} (1 - e^\alpha(t))
$$
for $t \in T_0$. Clearly, $D$ is a positive real analytic function
on $T_0$.

The normalizers of $T_0$ in $G_0$ and $K_0$ are equal and we denote
them by $N(T_0)$. The quotient $W_0 = N(T_0)/T_0$ is naturally
identified with a subgroup $W_0$ of the Weyl group $W$ of the root
system $R$ (cf.~\cite[Part II, Sec.~1]{var}). Denote by $[W_0]$
the order of $W_0$.

Then we have the following formula \cite[Part II, Sec.~15, Lemma 17]{var}
$$
d \nu(t) = \frac 1 {[W_0]} D(t) \, d \mu_{T_0}(t),
$$
where $\mu_{T_0}$ is the normalized Haar measure on the group
$T_0$.

\subsection{Elliptic pairing}
Still assuming that the ranks of $G_0$ and $K_0$ are equal, let $R^+$
be a set of positive roots in $R$. Harish-Chandra proved that the
function
$$
\Psi_{[V]}(t)
= \prod_{\alpha \in R^+}(1 - e^\alpha(t)) \Theta_{[V]}(t), \quad t \in T_0',
$$
extends to a real analytic function on $T_0$ \cite{ds2}.

Therefore, for any two elements $[U]$ and $[V]$ in $\K (\mathfrak
g,K)$, we can define
\begin{multline*}
\langle [U] \mid [V] \rangle_{ell}
= \int_{T_0} \Theta_{[U]}(t) \overline{\Theta_{[V]}(t)} \, d \nu(t) \\
= \frac 1 {[W_0]} \int_{T_0} D(t) \Theta_{[U]}(t)
\overline {\Theta_{[V]}(t)} \, d \mu_{T_0}(t) 
= \frac 1 {[W_0]} \int_{T_0}  \Psi_{[U]}(t)
\overline {\Psi_{[V]}(t)} \, d \mu_{T_0}(t).
\end{multline*}
This is clearly a biadditive pairing on $\K (\mathfrak g,K)$ with
values in $\mathbb C$, which we call the {\em elliptic pairing}.

If the group $G_0$ has rank greater than its maximal compact subgroup $K_0$,
we define the elliptic pairing on $\K (\mathfrak g,K)$ as the zero
pairing.

\section{Homological pairing}
\label{homological}
\subsection{Derived categories of $(\mathfrak g,K)$-modules}
It is well known that $\mathcal M(\mathfrak g,K)$ contains enough
injective and projective objects \cite[Ch.~I]{bw}. Moreover, for any
two $(\mathfrak g,K)$-modules $U$ and $V$ we have $\Ext_{(\mathfrak
  g,K)}^p(U,V) = 0$ for $p > \dim(\mathfrak g/\mathfrak k)$.

Denote by $D^b(\mathfrak g,K)$ the bounded derived category of
$\mathcal M(\mathfrak g, K)$ and $D^b(\mathfrak g,K)^\circ$ its
opposite category. Then we have the derived bifunctor
$\RHom_{(\mathfrak g,K)}$ from $D^b(\mathfrak g,K)^\circ \times
D^b(\mathfrak g,K)$ into the bounded derived category $D^b(\mathbb C)$
of complex vector spaces. As it is well known (see, for example,
\cite[Thm.~13.4.1]{ks})
\begin{equation}
  \label{rhom}
H^p(\RHom_{(\mathfrak g,K)}(U^\cdot, V^\cdot))
= \Hom_{D^b(\mathfrak g,K)}(U^\cdot, V^\cdot[p])
\end{equation}
for any two complexes $U^\cdot$, $V^\cdot$ in $D^b(\mathfrak g,K)$.

\subsection{A finiteness result}
Denote by $D_{adm}^b(\mathfrak g,K)$ the full subcategory of
$D^b(\mathfrak g,K)$ consisting of complexes with cohomology in
$\mathcal A(\mathfrak g,K)$. Then $D_{adm}^b(\mathfrak g,K)$ is a
triangulated category with natural $t$-structure and core $\mathcal
A(\mathfrak g,K)$.

Let $D : \mathcal A(\mathfrak g,K) \longrightarrow D_{adm}^b(\mathfrak
g,K)$ be the natural map attaching to a module $U$ the complex
$D(U)^\cdot$ such that $D(U)^0 = U$ and $D(U)^p = 0$ for $p \ne 0$.

\begin{lem}
  Let $U^\cdot$ and $V^\cdot$ be two objects in $D_{adm}^b(\mathfrak g,K)$.
  Then $\RHom_{(\mathfrak g,K)}(U^\cdot, V^\cdot)$ is a bounded complex of
  complex vector spaces with finite-dimensional cohomology.
\end{lem}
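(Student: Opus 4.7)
The plan is to reduce the statement to the case of Harish-Chandra modules concentrated in degree zero, and then to compute the resulting $\Ext$ groups from a finite resolution by finitely generated standard projectives.

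Since $U^\cdot$ and $V^\cdot$ belong to $D_{adm}^b(\mathfrak g, K)$, each has only finitely many nonzero cohomology objects, and all of these lie in $\mathcal A(\mathfrak g, K)$. The two standard hyper-$\Ext$ spectral sequences converging to $H^\cdot(\RHom_{(\mathfrak g, K)}(U^\cdot, V^\cdot))$ have $E_2$ pages built from the groups $\Ext^p_{(\mathfrak g, K)}(H^i(U^\cdot), H^j(V^\cdot))$ with $i, j$ ranging over a finite set. Combined with the uniform vanishing $\Ext^p_{(\mathfrak g, K)} = 0$ for $p > \dim(\mathfrak g/\mathfrak k)$ recalled at the start of this section, this reduces the lemma to the following assertion: for every pair of Harish-Chandra modules $U, V \in \mathcal A(\mathfrak g, K)$ and every $p$, the space $\Ext^p_{(\mathfrak g, K)}(U, V)$ is finite-dimensional.

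For this reduced claim, I would compute $\Ext^p_{(\mathfrak g, K)}(U, V)$ from a projective resolution
\[
0 \longrightarrow P_n \longrightarrow \cdots \longrightarrow P_1 \longrightarrow P_0 \longrightarrow U \longrightarrow 0
\]
of length $n \leq \dim(\mathfrak g/\mathfrak k)$ in $\mathcal M(\mathfrak g, K)$, with each $P_i$ of the form $\mathcal U(\mathfrak g) \otimes_{\mathcal U(\mathfrak k)} F_i$ for a finite-dimensional $K$-module $F_i$. Each such module is projective in $\mathcal M(\mathfrak g, K)$, and Frobenius reciprocity identifies
\[
\Hom_{(\mathfrak g, K)}\bigl(\mathcal U(\mathfrak g) \otimes_{\mathcal U(\mathfrak k)} F_i,\, V\bigr) \cong \Hom_K(F_i, V),
\]
which is finite-dimensional because $V$ is admissible and $F_i$ is finite-dimensional. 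Thus $\Ext^p_{(\mathfrak g, K)}(U, V)$ is the $p$-th cohomology of a finite complex of finite-dimensional vector spaces and is itself finite-dimensional.

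The main obstacle is producing the resolution with the stated finiteness properties. A surjection $P_0 \twoheadrightarrow U$ by a finitely generated standard projective is immediate, since $U$ is of finite length and is therefore generated over $\mathcal U(\mathfrak g)$ by a finite-dimensional $K$-stable subspace. Iterating this procedure requires that the kernel at each stage remain finitely generated as a $(\mathfrak g, K)$-module, a Noetherian-type property of the associated Hecke algebra. Granted that, the global dimension bound $\dim(\mathfrak g/\mathfrak k)$ allows truncation at stage $n$ while keeping $P_n$ projective, yielding the desired resolution.
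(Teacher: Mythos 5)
Your overall strategy coincides with the paper's: both reduce the statement for bounded complexes with admissible cohomology to the single claim that $\Ext^p_{(\mathfrak g,K)}(U,V)$ is finite-dimensional for Harish-Chandra modules $U,V$ and all $p$, and then conclude by \eqref{rhom}. Your reduction is via the two hyper-$\Ext$ spectral sequences and the uniform bound on cohomological dimension; the paper instead uses induction on cohomological length via truncation triangles. These devices are interchangeable here, so this is a presentational rather than a substantive difference.

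Where you genuinely diverge is at the base case. The paper simply cites \cite[I.2.8]{bw} for the finite-dimensionality of $\Ext^p_{(\mathfrak g,K)}(U,V)$ and stops. You instead re-prove it, by resolving $U$ by finitely generated projectives of the form $\mathcal U(\mathfrak g)\otimes_{\mathcal U(\mathfrak k)}F$ and applying Frobenius reciprocity to land in $\Hom_K(F,V)$, which is finite-dimensional by admissibility of $V$. This argument is correct and arguably a bit more elementary than the Borel--Wallach proof (it uses only that $U$ is finitely generated and $V$ admissible, rather than $Z(\mathfrak g)$-finiteness). Two small repairs are worth making. First, the ``Noetherian-type property of the Hecke algebra'' you invoke is really just Noetherianness of $\mathcal U(\mathfrak g)$: the standard projective $\mathcal U(\mathfrak g)\otimes_{\mathcal U(\mathfrak k)}F$ is a finitely generated $\mathcal U(\mathfrak g)$-module, so every submodule is finitely generated over $\mathcal U(\mathfrak g)$, and any finite set of generators, being $K$-finite, spans a finite-dimensional $K$-invariant subspace that generates it as a $(\mathfrak g,K)$-module; no appeal to properties of the full Hecke algebra (which, being a non-unital direct limit, is not Noetherian) is needed. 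Second, after truncating at stage $n\le\dim(\mathfrak g/\mathfrak k)$, the final term $P_n$ is projective and finitely generated but need not itself be of the form $\mathcal U(\mathfrak g)\otimes_{\mathcal U(\mathfrak k)}F_n$; it is, however, a direct summand of such a module, so $\Hom_{(\mathfrak g,K)}(P_n,V)$ is still a subspace of a finite-dimensional space, and the conclusion stands.
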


\begin{proof}
Let $U$ and $V$ be two objects in $\mathcal A(\mathfrak g,K)$. Then
they are admissible. Hence, by \cite[I.2.8]{bw}, $\Ext_{(\mathfrak
  g,K)}^p(U,V)$ are finite-dimensional for any $p \in \mathbb Z_+$.

Therefore, $\Hom_{D^b(\mathfrak g,K)}(D(U)^\cdot,D(V)^\cdot[p])$ is
finite-dimensional for any two modules $U$ and $V$ in $\mathcal
A(\mathfrak g,K)$ and $p \in \mathbb Z$. By induction on the
cohomological length of a bounded complex using standard truncation
arguments (cf.~\cite[Ch.~3, 4.2]{dercat}), this implies
that $\Hom_{D^b(\mathfrak g,K)}(U^\cdot,V^\cdot)$ is
finite-dimensional for any two bounded complexes $U^\cdot$ and
$V^\cdot$ in $D_{adm}^b(\mathfrak g,K)$.  By \eqref{rhom}, this
implies the statement of the lemma.
\end{proof}

Therefore, we can consider the bifunctor $\RHom_{(\mathfrak g,K)}$
from $D_{adm}^b(\mathfrak g,K)^o \times D_{adm}^b(\mathfrak g,K)$ into
the full subcategory $D_{fd}^b(\mathbb C)$ of $D^b(\mathbb C)$
consisting of complexes with finite-dimensional cohomology.

\subsection{Homological pairing}
Since the category $\mathcal A(\mathfrak g,K)$ is not semisimple, to
define a natural pairing on its Grothendieck group $\K (\mathfrak
g,K)$ we have to use homological algebra.

We identify the Grothendieck group of the triangulated category
$D_{adm}^b(\mathfrak g,K)$ with $\K(\mathfrak g,K)$ via the map
$[U^\cdot] \longmapsto \sum_{p \in \mathbb Z} (-1)^p [H^p(U^\cdot)]$
(see, for example, \cite[Ch.~4, Sec.~3.5]{dercat}). In the same
fashion, the Grothendieck group of $D_{fd}^b(\mathbb C)$ is identified
with integers $\mathbb Z$ via the map $[A^\cdot] \longmapsto \sum_{p
  \in \mathbb Z} (-1)^p \dim H^p(A^\cdot)$.

Composition of the map $\RHom_{(\mathfrak g,K)} : D_{adm}^b(\mathfrak
g,K)^o \times D_{adm}^b(\mathfrak g,K) \longrightarrow
D_{fd}^b(\mathbb C)$ with the natural map of $D_{fd}^b(\mathbb C)
\longrightarrow \K(D_{fd}^b(\mathbb C)) = \mathbb Z$ factors through
$\K(\mathfrak g,K) \times \K(\mathfrak g,K)$. Hence, it defines a biadditive
pairing $\K(\mathfrak g,K) \times \K(\mathfrak g,K) \longrightarrow
\mathbb Z$. We call it the {\em homological pairing} on $\K(\mathfrak
g,K)$. For $U^\cdot$ and $V^\cdot$ in $D_{adm}^b(\mathfrak g,K)$ we
denote the value of this pairing by $\langle [U^\cdot] \mid [V^\cdot]
\rangle_{(\mathfrak g,K)}$.

\begin{pro}\footnote{Because of this result, this pairing is sometimes called
    the {\em Euler-Poincar\'e pairing}.}
Let $U$ and $U'$ be two modules in $\mathcal A(\mathfrak g,K)$. Then
we have
  $$
  \langle [U] \mid [U'] \rangle_{(\mathfrak g,K)}
  = \sum_{p \in \mathbb Z} (-1)^p \dim \Ext_{(\mathfrak g,K)}^p(U,U').
  $$
\end{pro}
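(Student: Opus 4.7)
The plan is to unwind the definition of the homological pairing for the specific case where the two complexes are concentrated in degree $0$, and then invoke the standard identification between cohomology of $\RHom$ and $\Ext$ groups.

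First, I would observe that by construction the number $\langle [U]\mid [U']\rangle_{(\mathfrak g,K)}$ is the image of $\RHom_{(\mathfrak g,K)}(D(U)^\cdot, D(U')^\cdot)$ under the composition
$$
D_{adm}^b(\mathfrak g,K)^o \times D_{adm}^b(\mathfrak g,K)
\xrightarrow{\RHom} D_{fd}^b(\mathbb C)
\longrightarrow \K(D_{fd}^b(\mathbb C)) = \mathbb Z,
$$
and the last map sends a complex $A^\cdot$ to $\sum_{p\in\mathbb Z}(-1)^p \dim H^p(A^\cdot)$. Applied to $A^\cdot = \RHom_{(\mathfrak g,K)}(D(U)^\cdot, D(U')^\cdot)$, this yields
$$
\langle [U] \mid [U'] \rangle_{(\mathfrak g,K)}
= \sum_{p \in \mathbb Z} (-1)^p \dim H^p\!\bigl(\RHom_{(\mathfrak g,K)}(D(U)^\cdot, D(U')^\cdot)\bigr).
$$
The preceding lemma guarantees that the complex has finite-dimensional cohomology, and boundedness together with the vanishing $\Ext_{(\mathfrak g,K)}^p(U,U')=0$ for $p > \dim(\mathfrak g/\mathfrak k)$ recalled in the derived category subsection ensure the sum is finite.

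Second, I would apply the formula \eqref{rhom} to identify
$$
H^p\!\bigl(\RHom_{(\mathfrak g,K)}(D(U)^\cdot, D(U')^\cdot)\bigr)
= \Hom_{D^b(\mathfrak g,K)}\!\bigl(D(U)^\cdot, D(U')^\cdot[p]\bigr).
$$
Since $D(U)^\cdot$ and $D(U')^\cdot$ are modules placed in degree zero, the standard identification between morphisms in the derived category and derived functors gives
$$
\Hom_{D^b(\mathfrak g,K)}\!\bigl(D(U)^\cdot, D(U')^\cdot[p]\bigr)
\;\cong\; \Ext_{(\mathfrak g,K)}^p(U, U'),
$$
which is a purely general feature of the derived category of an abelian category with enough injectives.

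Substituting this identification into the sum above yields exactly
$$
\langle [U] \mid [U'] \rangle_{(\mathfrak g,K)}
= \sum_{p \in \mathbb Z} (-1)^p \dim \Ext_{(\mathfrak g,K)}^p(U, U'),
$$
which is the claimed identity. There is no real obstacle here: the proof is formal, and the only mildly delicate point is making sure that all Euler characteristics used are actually defined, which is precisely the content of the preceding lemma together with the global bound on cohomological dimension.
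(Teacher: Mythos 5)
Your proposal is correct and follows essentially the same route as the paper: unwind the definition of the homological pairing to get an Euler characteristic of $\RHom_{(\mathfrak g,K)}(D(U)^\cdot,D(U')^\cdot)$, apply the identity \eqref{rhom} to recognize the cohomology as $\Hom_{D^b(\mathfrak g,K)}(D(U)^\cdot,D(U')^\cdot[p])$, and then identify this with $\Ext^p_{(\mathfrak g,K)}(U,U')$. The remarks about finiteness of the sum (via the preceding lemma and the cohomological dimension bound) are a harmless elaboration that the paper leaves implicit.
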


\begin{proof}
By \eqref{rhom}, we have
\begin{multline*}
\langle [U] \mid [U'] \rangle_{(\mathfrak g,K)} =
  \langle [D(U)^\cdot] \mid [D(U')^\cdot] \rangle_{(\mathfrak g,K)} \\
  = \sum_{p \in \mathbb Z} (-1)^p
  \dim H^p(\RHom_{(\mathfrak g,K)}(D(U)^\cdot,D(U')^\cdot)) \\
  = \sum_{p \in \mathbb Z} (-1)^p
\dim \Hom_{D^b(\mathfrak g,K)}(D(U)^\cdot,D(U')^\cdot[p]) 
= \sum_{p \in \mathbb Z} (-1)^p \dim \Ext_{(\mathfrak g,K)}^p(U,U').
\end{multline*}
\end{proof}

\section{Frobenius reciprocity}
\label{frobenius}
\subsection{Frobenius reciprocity for cohomological induction}
Let $\sigma$ be the Cartan involution corresponding to the maximal
compact subgroup $K_0$ of $G_0$. Let $\mathfrak c$ be a
$\sigma$-stable Cartan subalgebra of $\mathfrak g$. Then $\mathfrak c
= \mathfrak t \oplus \mathfrak a$ is the decomposition into
eigenspaces for eigenvalues $1$ and $-1$ of $\sigma$. Let $T$ be the
subgroup of $K$ which centralizes $\mathfrak c$. Then its Lie algebra
is identified with $\mathfrak t$.

As before, we define the category $\mathcal M(\mathfrak c, T)$
consisting of $(\mathfrak c,T)$-modules. Clearly, an irreducible
$(\mathfrak c,T)$-module is finite-dimensional. Therefore, in this
case $\mathcal A(\mathfrak c, T)$ is the full subcategory of
finite-dimensional $(\mathfrak c,T)$-modules.

Let $R$ be the root system of the pair $(\mathfrak g, \mathfrak c)$ in
$\mathfrak c^*$. Denote by $R^+$ a set of positive roots in $R$.
Let
$$
\mathfrak n = \bigoplus_{\alpha \in R^+} \mathfrak g_\alpha
$$
and
$$
\mathfrak b = \mathfrak c \oplus \mathfrak n.
$$
Then $\mathfrak b$ is a Borel subalgebra in $\mathfrak
g$. Moreover, $T$ normalizes $\mathfrak b$.

Let $U$ be a $(\mathfrak g, K)$-module. Then the zero Lie algebra
homology $H_0(\mathfrak n,U)$ is a $(\mathfrak c, T)$-module.  By
abuse of notation, we denote by $H_\bullet(\mathfrak n, - )$ the
derived functor of Lie algebra homology from $D^b(\mathfrak g, K)$
into $D^b(\mathfrak c,T)$. Hence, the $p$-th Lie algebra homology
group $H_p(\mathfrak n, U)$ of $U$ is $H^{-p}(H_\bullet(\mathfrak n,
D(U)^\cdot))$.

We consider the forgetful functor from $\mathcal M(\mathfrak g,K)$
into $\mathcal M(\mathfrak g,T)$. It has a right adjoint
$\Gamma_{K,T}$ -- the {\em Zuckerman functor} from $\mathcal
M(\mathfrak g,T)$ into $\mathcal M(\mathfrak g,K)$. Its right
cohomological dimension is $\le \dim(K/T)$. We follow the forgetful
functor by the forgetful functor from $\mathcal M(\mathfrak g,T)$ into
$\mathcal M(\mathfrak b,T)$.  This functor also has a right adjoint
functor $P$ constructed as follows.  Consider $\mathcal U(\mathfrak
g)$ as a $\mathcal U(\mathfrak b)$-module for left multiplication. Let
$V$ be a $(\mathfrak b,T)$-module. Then $\Hom_{\mathcal U(\mathfrak
  b)}(\mathcal U(\mathfrak g),V)$ has a natural $T$-action, $T$ acting
on $\mathcal U(\mathfrak g)$ via the adjoint action. Let
$\Hom_{\mathcal U(\mathfrak b)}(\mathcal U(\mathfrak g),V)_{[T]}$ be
the largest algebraic submodule of $\Hom_{\mathcal U(\mathfrak
  b)}(\mathcal U(\mathfrak g),V)$ for that action of $T$. Then
$\mathcal U(\mathfrak g)$ acts on this module by right multiplication
on $\mathcal U(\mathfrak g)$. In this way, one gets the $(\mathfrak
g,T)$-module $P(V)$. The functor $P : \mathcal M(\mathfrak b,T)
\longrightarrow \mathcal M(\mathfrak g,T)$ is exact.

Consider now a $(\mathfrak c,T)$-module $V$. We can view it as a
$(\mathfrak b,T)$-module. This functor has a left adjoint functor
$H_0(\mathfrak n,-)$.

We define the functor
$$
I(\mathfrak c, R^+, - ) : \mathcal M(\mathfrak c,T) \longrightarrow
\mathcal M(\mathfrak g,K)
$$
as the composition of the functor $P$ followed by the Zuckerman functor
$\Gamma_{K,T}$.

The next result is a formal consequence of the above discussion.

\begin{lem}
  The functor $I(\mathfrak c,R^+, - )$ is a right adjoint of the
  functor $H_0(\mathfrak n, - )$.
\end{lem}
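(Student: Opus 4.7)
The proof will be purely formal; essentially all the work has been done in the construction of the intermediate functors. The plan is to realize $I(\mathfrak{c}, R^+, -)$ as a three-step composition of right adjoints and then invoke the principle that a composite of right adjoints is right adjoint to the composite of the corresponding left adjoints taken in the reverse order.

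First, I would make explicit the three adjoint pairs already produced above, in each case listing the left adjoint first. Write $\iota : \mathcal{M}(\mathfrak{c},T) \to \mathcal{M}(\mathfrak{b},T)$ for the inflation functor that views a $(\mathfrak{c},T)$-module as a $(\mathfrak{b},T)$-module with trivial $\mathfrak{n}$-action. The three pairs are: $(H_0(\mathfrak{n}, -),\ \iota)$ between $\mathcal{M}(\mathfrak{b},T)$ and $\mathcal{M}(\mathfrak{c},T)$; the pair (forgetful, $P$) between $\mathcal{M}(\mathfrak{g},T)$ and $\mathcal{M}(\mathfrak{b},T)$; and the pair (forgetful, $\Gamma_{K,T}$) between $\mathcal{M}(\mathfrak{g},K)$ and $\mathcal{M}(\mathfrak{g},T)$. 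Each of these is either exhibited directly in the preceding paragraphs or, in the case of $H_0(\mathfrak{n},-)$, follows from the elementary identification $H_0(\mathfrak{n},V)=V/\mathfrak{n}V$ and the universal property of this quotient.

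Second, since by definition $I(\mathfrak{c}, R^+, -) = \Gamma_{K,T} \circ P \circ \iota$, stringing together the three hom-isomorphisms yields, for any $U \in \mathcal{M}(\mathfrak{g},K)$ and $V \in \mathcal{M}(\mathfrak{c},T)$, a natural chain
$$
\Hom_{(\mathfrak{g},K)}(U,\, I(\mathfrak{c}, R^+, V)) \cong \Hom_{(\mathfrak{g},T)}(U,\, P(\iota(V))) \cong \Hom_{(\mathfrak{b},T)}(U,\, \iota(V)) \cong \Hom_{(\mathfrak{c},T)}(H_0(\mathfrak{n}, U),\, V),
$$
where in the middle two expressions $U$ is understood via the appropriate forgetful functor. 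Read end to end, this identifies $I(\mathfrak{c}, R^+, -)$ as a right adjoint of $H_0(\mathfrak{n}, -)$.

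There is no substantive obstacle; the lemma is a bookkeeping exercise and the only minor subtlety — that the $H_0(\mathfrak{n},-)$ appearing in the lemma is implicitly preceded by the forgetful functors $\mathcal{M}(\mathfrak{g},K) \to \mathcal{M}(\mathfrak{g},T) \to \mathcal{M}(\mathfrak{b},T)$ — is harmless, as these forgetful functors are exactly the left adjoints whose composition is being used to pair off with $\Gamma_{K,T} \circ P$.
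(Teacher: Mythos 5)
Your proof is correct and is essentially the argument the paper has in mind: the paper introduces the three adjoint pairs (forgetful, $\Gamma_{K,T}$), (forgetful, $P$), and $(H_0(\mathfrak n,-),\iota)$ in the paragraphs just before the lemma and then states the lemma as a ``formal consequence of the above discussion,'' which is exactly the composition-of-adjunctions chain you have written out explicitly.
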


The right derived functors $R^p I(\mathfrak c,R^+, - ) : \mathcal
M(\mathfrak c,T) \longrightarrow \mathcal M(\mathfrak g,K)$ are called
the {\em cohomological induction} functors.

Since both functors $I(\mathfrak c,R^+, - )$ and $H_0(\mathfrak n, -)$
have finite cohomological dimension, their derived functors exist as
functors between corresponding bounded derived categories, and a formal
consequence of the above lemma is the following version of Frobenius
reciprocity \cite[Ch.~5, Thm.~1.7.1]{dercat}.

\begin{pro}
  \label{fro}
  The right derived functor $RI(\mathfrak c,R^+, - ) : D^b(\mathfrak
  c,T) \longrightarrow D^b(\mathfrak g,K)$ is a right adjoint of
  $H_\bullet(\mathfrak n, - )$.
\end{pro}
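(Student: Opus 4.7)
The plan is to deduce the proposition from the abelian-level adjunction $H_0(\mathfrak n, -) \dashv I(\mathfrak c, R^+, -)$ established in the preceding lemma, via the general principle that an adjoint pair of additive functors between abelian categories lifts to an adjoint pair of their derived functors whenever each side has finite cohomological dimension. This is precisely the content of \cite[Ch.~5, Thm.~1.7.1]{dercat}, so the proof is essentially a verification of hypotheses.

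First I would check the cohomological dimensions. The functor $H_0(\mathfrak n, -)$ is right exact, and its left derived functor $H_\bullet(\mathfrak n, -)$ can be computed by tensoring with the Chevalley--Eilenberg resolution $\Lambda^\bullet \mathfrak n \otimes \mathcal U(\mathfrak g)$, hence has left cohomological dimension at most $\dim \mathfrak n$. The functor $I(\mathfrak c, R^+, -) = \Gamma_{K,T} \circ P$ is left exact; since the excerpt records that $P$ is exact and that $\Gamma_{K,T}$ has right cohomological dimension at most $\dim(K/T)$, the composite $I(\mathfrak c, R^+, -)$ also has finite right cohomological dimension. Both categories $\mathcal M(\mathfrak g, K)$ and $\mathcal M(\mathfrak c, T)$ have enough projectives and enough injectives by \cite[Ch.~I]{bw}, which is needed for the derived functors to exist between the bounded derived categories.

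Next I would apply the general derived adjunction theorem. Given $U^\cdot \in D^b(\mathfrak g, K)$ and $V^\cdot \in D^b(\mathfrak c, T)$, one represents $U^\cdot$ by a bounded complex $P^\cdot$ whose terms are acyclic for $H_0(\mathfrak n, -)$ (for instance, projective $(\mathfrak g, K)$-modules), and $V^\cdot$ by a bounded complex $J^\cdot$ whose terms are acyclic for $I(\mathfrak c, R^+, -)$ (for instance, injective $(\mathfrak c, T)$-modules); the finite cohomological dimensions ensure that such bounded resolutions exist. Applying the abelian adjunction termwise and passing to total complexes yields a natural isomorphism
$$
\Hom_{D^b(\mathfrak c, T)}(H_\bullet(\mathfrak n, U^\cdot), V^\cdot) \cong \Hom_{D^b(\mathfrak g, K)}(U^\cdot, RI(\mathfrak c, R^+, V^\cdot)),
$$
which is the desired adjunction.

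The main obstacle is the compatibility of the chosen resolutions with both functors simultaneously: one must verify that a complex of projectives really does compute the left derived functor on the left side of the $\Hom$, while a complex of injectives computes the right derived functor on the right side, and that the adjunction isomorphism for Hom complexes descends to an isomorphism on cohomology (equivalently, in the homotopy category, and then in the derived category). This is the standard argument encoded in \cite[Ch.~5, Thm.~1.7.1]{dercat}, which reduces to the truncation and acyclic-resolution techniques already used in the preceding finiteness lemma. Once invoked, it delivers the adjointness of $H_\bullet(\mathfrak n, -)$ and $RI(\mathfrak c, R^+, -)$ formally.
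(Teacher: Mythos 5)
Your proposal is correct and follows exactly the paper's route: the paper likewise observes that $H_0(\mathfrak n,-)$ and $I(\mathfrak c,R^+,-)$ have finite cohomological dimension (the latter because $P$ is exact and $\Gamma_{K,T}$ has bounded right cohomological dimension) and then invokes the general lifting of adjunctions to bounded derived categories from \cite[Ch.~5, Thm.~1.7.1]{dercat}. Your additional explicit bounds (via the Chevalley--Eilenberg resolution for $\mathfrak n$-homology and $\dim(K/T)$ for the Zuckerman functor) are simply the verification the paper leaves implicit.
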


\subsection{Finiteness results}
We also have the following finiteness results.

\begin{lem}
  \label{fin1}
  Let $U^\cdot$ be an object in $D_{adm}^b(\mathfrak g,K)$. Then
  $H_\bullet(\mathfrak n, U^\cdot)$ is an object in $D_{adm}^b(\mathfrak c,T)$.
\end{lem}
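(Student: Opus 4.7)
My plan is to reduce, by standard truncation, from bounded complexes to single Harish--Chandra modules, and then invoke the Casselman--Osborne theorem together with $K$-admissibility to deduce the finite-dimensionality of the homology.

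First I handle boundedness and the reduction to a single module. Since $\mathfrak n$ is finite-dimensional, the Koszul resolution $\Lambda^\bullet \mathfrak n \otimes (-)$ has length $\dim \mathfrak n$, so $H_p(\mathfrak n, -)$ vanishes for $p > \dim \mathfrak n$ and $H_\bullet(\mathfrak n, U^\cdot)$ is automatically bounded. Given $U^\cdot \in D_{adm}^b(\mathfrak g, K)$, the canonical truncation triangles $\tau_{\le n-1} U^\cdot \to \tau_{\le n} U^\cdot \to H^n(U^\cdot)[-n]$ remain in $D_{adm}^b(\mathfrak g, K)$; applying the triangulated functor $H_\bullet(\mathfrak n, -)$ yields distinguished triangles in $D^b(\mathfrak c, T)$, and induction on the cohomological length of $U^\cdot$ (using that $D_{adm}^b(\mathfrak c, T)$ is a triangulated subcategory of $D^b(\mathfrak c, T)$) reduces the statement to the case $U^\cdot = D(U)^\cdot$ with $U \in \mathcal A(\mathfrak g, K)$.

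It remains to show that $H_p(\mathfrak n, U) \in \mathcal A(\mathfrak c, T)$ for every $U \in \mathcal A(\mathfrak g, K)$ and every $p$. Since $\mathcal A(\mathfrak c, T)$ is precisely the category of finite-dimensional $(\mathfrak c, T)$-modules, I must establish $\dim H_p(\mathfrak n, U) < \infty$. Using the long exact sequence in $\mathfrak n$-homology together with the finite length of $U$, I reduce to the case that $U$ is irreducible, and hence admits an infinitesimal character $\chi$. The Casselman--Osborne theorem then asserts that the center $Z(\mathfrak g)$ of $\mathcal U(\mathfrak g)$ acts on $H_p(\mathfrak n, U)$ through the $\rho$-shifted Harish--Chandra homomorphism $Z(\mathfrak g) \to \mathcal U(\mathfrak c)$, so $H_p(\mathfrak n, U)$ is a sum of generalized $\mathfrak c$-eigenspaces whose eigenvalues lie in the finite set $W\chi - \rho \subset \mathfrak c^*$. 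Each such eigenvalue prescribes a definite $T$-character (via its $\mathfrak t$-component) and a definite generalized $\mathfrak a$-eigenvalue; combined with the $K$-admissibility of $U$ and the Koszul presentation of $H_p(\mathfrak n, U)$ as a subquotient of $\Lambda^p \mathfrak n \otimes U$, this forces each generalized eigenspace to be finite-dimensional.

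The main obstacle is this last finite-dimensionality step: $K$-admissibility of $U$ does not by itself bound $T$-weight multiplicities of $U$, since a single $T$-weight may occur in infinitely many $K$-types. The Casselman--Osborne constraint on $Z(\mathfrak g)$-eigenvalues is the crucial ingredient, because it simultaneously pins down the $\mathfrak t$- and $\mathfrak a$-actions to a finite list of possibilities, after which the admissibility of $U$ suffices to cut each generalized weight space down to finite dimension.
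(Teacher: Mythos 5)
Your reduction by canonical truncation to a single finite-length module $U$, together with the boundedness observation, matches the paper's argument. The gap is in the final step: finite-dimensionality of $H_p(\mathfrak n, U)$ for a finite-length Harish--Chandra module is a genuine theorem, and Casselman--Osborne plus $K$-admissibility, combined as you do, do not deliver it. Two things go wrong. First, when $\mathfrak a \neq 0$, fixing a $T$-character $\mu$ and a generalized $\mathfrak a$-eigenvalue $\lambda$ does not cut $\Lambda^p\mathfrak n \otimes U$ down to finite dimension: $K$-admissibility controls $K$-multiplicities, not $T$-multiplicities, and the $\mathfrak a$-action moves between $K$-types, so there is no a priori bound on the generalized $\lambda$-eigenspace inside the (possibly infinite-dimensional) $T$-isotypic piece of $U$. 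Second, and more decisively, generalized eigenspace functors are only left exact, so the generalized $\nu$-eigenspace of the subquotient $H_p(\mathfrak n, U)$ can be strictly larger than that of $\Lambda^p\mathfrak n\otimes U$; compare $\mathbb C[x]$ under multiplication by $x$, whose generalized $0$-eigenspace is trivial, with its quotient $\mathbb C[x]/(x)$, whose generalized $0$-eigenspace is all of it. This is in fact the mechanism at play here: $U$ itself typically has trivial generalized $\mathfrak c$-eigenspaces at the Casselman--Osborne points, and the content of the finiteness theorem is precisely that passing to $\mathfrak n$-homology produces nonzero but finite-dimensional ones.

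The paper sidesteps all of this by citing the known finite-dimensionality result, with a pointer to the geometric ($\mathcal D$-module/localization) proof in Mili\v ci\'c's manuscript. The algebraic route (essentially Casselman's) needs a further finiteness input beyond Casselman--Osborne --- in particular finite generation of $U$ over $\mathcal U(\bar{\mathfrak n})$ and a coherence argument over $\mathcal U(\mathfrak c)$ --- so the step you treat as a consequence of admissibility is exactly where the real work lies.
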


\begin{proof}
By induction in homological length and standard truncation argument
(cf.~\cite[Ch.~3, 4.2]{dercat}), we can reduce the proof
to the case $U^\cdot = D(U)^\cdot$, where $U$ is a $(\mathfrak
g,K)$-module of finite length. In this case, it is enough to prove
that Lie algebra homology groups $H_p(\mathfrak n,U)$, $p \in \mathbb
Z_+$, are finite-dimensional. This is well-known, a geometric proof
can be found, for example, in \cite[Ch.~4, Thm.~4.1]{book}.
\end{proof}

\begin{lem}
  \label{fin2}
 Let $V^\cdot$ be an object in $D_{adm}^b(\mathfrak c,T)$. Then
  $RI(\mathfrak c, R^+, V^\cdot)$ is an object in $D_{adm}^b(\mathfrak g,K)$.
\end{lem}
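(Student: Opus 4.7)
The plan is to mimic the proof of Lemma~\ref{fin1}. By induction on the cohomological length of $V^\cdot$ together with the standard truncation distinguished triangles (cf.~\cite[Ch.~3, 4.2]{dercat}), the claim reduces to the single-object case $V^\cdot = D(V)^\cdot$ for a finite-dimensional $(\mathfrak c, T)$-module $V$. Since $P$ is exact and the Zuckerman functor $\Gamma_{K,T}$ has cohomological dimension at most $\dim(K/T)$, the complex $RI(\mathfrak c, R^+, D(V)^\cdot)$ is automatically bounded, with cohomology concentrated in degrees $0, \dots, \dim(K/T)$ and equal to the classical cohomologically induced modules $R^p\Gamma_{K,T}(P(V))$.

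It then remains to verify that each $R^p\Gamma_{K,T}(P(V))$ is a Harish-Chandra module. This is the classical finiteness theorem for cohomological induction due to Zuckerman, a detailed proof of which can be found in \cite[Ch.~II, \S 5]{bw}. The scheme of argument proceeds in three steps: (i) the finite-dimensional $(\mathfrak c, T)$-module $V$ has an infinitesimal character, which propagates via the Harish-Chandra homomorphism to give each $R^p\Gamma_{K,T}(P(V))$ an infinitesimal character; (ii) a $K$-finite Koszul-type resolution computing $R\Gamma_{K,T}$ shows that each $K$-isotypic component of the cohomology is finite-dimensional, so the module is admissible as a $(\mathfrak g, K)$-module; (iii) Harish-Chandra's theorem that an admissible $(\mathfrak g, K)$-module with an infinitesimal character has finite length then gives the desired conclusion.

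The main obstacle is step (ii): one must control the $K$-multiplicities of the cohomologically induced module by genuine analysis of the $(\mathfrak g, T)$-module structure of $P(V)$, typically through a $K$-injective resolution of $P(V)$ whose terms have manifestly finite $K$-multiplicities. Steps (i) and (iii) are formal once admissibility is in hand.
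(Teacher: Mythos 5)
Your proposal is correct but takes a genuinely different route from the paper's proof. After the common reduction (via truncations) to $V^\cdot = D(V)^\cdot$ with $V$ finite-dimensional, the paper reduces further, by induction on $\dim V$, to the case of an \emph{irreducible} $(\mathfrak c,T)$-module, and then invokes the geometric description of cohomologically induced modules: by the duality theorem of Hecht--Mili\v ci\'c--Schmid--Wolf (\cite{hmsw1}), the cohomologies of $RI(\mathfrak c,R^+,D(V)^\cdot)$ are duals of global sections of holonomic Harish-Chandra sheaves on the flag variety, which by \cite[Ch.~3, Thm.~6.3]{book} have finite length. This geometric argument is natural here, since the same machinery of Harish-Chandra sheaves and the duality theorem is already used throughout Section 4 to compute the homological pairing. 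Your proposal instead calls on the classical \emph{algebraic} finiteness theorem for cohomological induction (infinitesimal character propagation, admissibility via a $K$-finite resolution, then Harish-Chandra's admissible-plus-infinitesimal-character-implies-finite-length theorem). That approach is valid and self-contained in a different sense: it avoids localization theory entirely.

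Two small points worth fixing. First, your step (i) silently assumes that a general finite-dimensional $(\mathfrak c,T)$-module $V$ has an infinitesimal character, which is false when $\mathfrak c$ acts non-semisimply (only a generalized infinitesimal character is guaranteed); you need either the paper's extra induction on $\dim V$ to reduce to irreducibles, or a remark that the argument works just as well with generalized infinitesimal characters. Second, the citation for the Zuckerman finiteness theorem is off: the proof you sketch is not in Borel--Wallach; it is found in Vogan's \emph{Representations of Real Reductive Lie Groups} or Knapp--Vogan's \emph{Cohomological Induction and Unitary Representations}.
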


\begin{proof}
  By induction in homological length and standard truncation argument
  (cf.~\cite[Ch.~3, 4.2]{dercat}), we can reduce the
  proof to the case $V^\cdot = D(V)^\cdot$, where $V$ is a
  finite-dimensional $(\mathfrak c,T)$-module. Then, by induction in
  dimension, we can reduce the proof to the case of finite-dimensional
  irreducible $(\mathfrak c,T)$-modules. In this case, by the main
  result of \cite{hmsw1}, cohomologies of the complex $RI(\mathfrak c,
  R^+, D(V)^\cdot)$ are duals of cohomologies of holonomic $\mathcal
  D$-modules on the flag variety $X$ of $\mathfrak g$.  By
  \cite[Ch.~3, Thm.~6.3]{book}, these are $(\mathfrak g,K)$-modules of
  finite length.
\end{proof}

\subsection{Homological pairing and cohomological induction}
Frobenius reciprocity and the finiteness results \ref{fin1} and
\ref{fin2} imply the following version of Frobenius reciprocity for
the homological pairings on the Grothendieck groups $\K(\mathfrak
g,K)$ and $\K(\mathfrak c,T)$.

\begin{pro}
\label{fro-pairing}
  Let $V^\cdot$ be an object in $D_{adm}^b(\mathfrak c,T)$ and $U^\cdot$
  an object in $D_{adm}^b(\mathfrak g,K)$. Then we have
$$
\langle [U^\cdot] \mid [RI(\mathfrak c,R^+, V^\cdot)] \rangle_{(\mathfrak g,K)}
= \langle [H_\bullet(\mathfrak n,U^\cdot)] \mid [V^\cdot] \rangle_{(\mathfrak c,T)}.
$$
\end{pro}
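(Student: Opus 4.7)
The plan is to deduce this identity directly from derived Frobenius reciprocity (Proposition \ref{fro}), once the two finiteness lemmas guarantee that the relevant $\RHom$-complexes live in $D_{fd}^b(\mathbb C)$ so that Euler--Poincar\'e characteristics make sense on both sides. There is essentially no calculation; the content is organizing the adjunction correctly.

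First, I would invoke the finiteness lemmas to make both pairings well-defined. By Lemma \ref{fin1}, $H_\bullet(\mathfrak n, U^\cdot)$ lies in $D_{adm}^b(\mathfrak c,T)$, and by Lemma \ref{fin2}, $RI(\mathfrak c, R^+, V^\cdot)$ lies in $D_{adm}^b(\mathfrak g,K)$. Hence by the finiteness lemma of Section \ref{homological}, both complexes
$$
\RHom_{(\mathfrak g,K)}\bigl(U^\cdot,\, RI(\mathfrak c,R^+,V^\cdot)\bigr)
\quad\text{and}\quad
\RHom_{(\mathfrak c,T)}\bigl(H_\bullet(\mathfrak n, U^\cdot),\, V^\cdot\bigr)
$$
belong to $D_{fd}^b(\mathbb C)$, so each defines an integer via its Euler characteristic, which by definition is the respective side of the claimed equality.

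Next, I would apply Proposition \ref{fro}. Shifting by $[p]$ in the second argument and using the adjunction there, for each $p \in \mathbb Z$ we get a natural isomorphism
$$
\Hom_{D^b(\mathfrak g,K)}\bigl(U^\cdot,\, RI(\mathfrak c,R^+,V^\cdot)[p]\bigr)
\;\cong\;
\Hom_{D^b(\mathfrak c,T)}\bigl(H_\bullet(\mathfrak n, U^\cdot),\, V^\cdot[p]\bigr).
$$
By the identification \eqref{rhom} applied in both categories, this is the same as
$$
H^p\bigl(\RHom_{(\mathfrak g,K)}(U^\cdot, RI(\mathfrak c,R^+,V^\cdot))\bigr)
\;\cong\;
H^p\bigl(\RHom_{(\mathfrak c,T)}(H_\bullet(\mathfrak n, U^\cdot),\, V^\cdot)\bigr)
$$
for every $p$.

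Finally, taking the alternating sum of dimensions, which is legitimate by the first step, yields the desired identity of homological pairings. The only point that needs care is that the adjunction of Proposition \ref{fro} is genuinely an equality at the level of triangulated-category Hom groups for all shifts (so that it recovers each cohomology of $\RHom$, not merely the $0$-th one); this is built into the statement of that proposition and into \eqref{rhom}, so no further work is needed. There is no real obstacle: all technical input, namely the derived adjunction and the two finiteness results, is already in hand, and this proposition is the formal Euler-characteristic shadow of that adjunction.
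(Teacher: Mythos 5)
Your argument is correct and matches the paper's proof in all essentials: both pass through the identification \eqref{rhom}, apply the derived adjunction of Proposition \ref{fro} at each shift $[p]$ (using that $RI(\mathfrak c,R^+,-)$, as a triangulated functor, commutes with the shift), and then take the alternating sum. Your extra remark that Lemmas \ref{fin1} and \ref{fin2} are what make the Euler characteristics on both sides well-defined is implicit in the paper and is a reasonable thing to spell out, but it is not a different route.
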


\begin{proof}
 Using (\ref{rhom}) twice, we have
\begin{multline*}
  \langle [U^\cdot] \mid [RI(\mathfrak c,R^+, V^\cdot)] \rangle_{(\mathfrak g,K)}
  = \sum_{p \in \mathbb Z} (-1)^p \dim H^p(\RHom_{(\mathfrak g,K)} (U^\cdot,
  RI(\mathfrak c,R^+, V^\cdot))) \\
  = \sum_{p \in \mathbb Z} (-1)^p \dim \Hom_{D^b(\mathfrak g,K)} (U^\cdot,
  RI(\mathfrak c,R^+, V^\cdot)[p])\\
 = \sum_{p \in \mathbb Z} (-1)^p \dim \Hom_{D^b(\mathfrak g,K)} (U^\cdot,
  RI(\mathfrak c,R^+, V^\cdot[p]))\\
  = \sum_{p \in \mathbb Z} (-1)^p \dim \Hom_{D^b(\mathfrak c,T)} (H_\bullet(\mathfrak n,
  U^\cdot), V^\cdot[p])\\
  = \sum_{p \in \mathbb Z} (-1)^p \dim H^p (\RHom_{(\mathfrak c,T)}
  (H_\bullet(\mathfrak n,U^\cdot), V^\cdot))
  = \langle [H_\bullet(\mathfrak n,U^\cdot)] \mid [V^\cdot] \rangle_{(\mathfrak c,T)}.
\end{multline*}
\end{proof}

\section{Calculation of homological pairing}

\subsection{A vanishing result}
Let $\mathfrak a$ be a nonzero Lie algebra. Denote by $\mathcal M(\mathfrak
a)$ the category of $\mathfrak a$-modules. 

The following vanishing result is well-known, we include a proof for
convenience of the reader.

\begin{lem}
  \label{lavan}
  Let $U$ and $V$ be two finite-dimensional $\mathfrak a$-modules.
  Then
  $$
  \sum_{p=0}^{\dim \mathfrak a} (-1)^p \dim \Ext_{\mathfrak a}^p (U,V) = 0.
  $$
\end{lem}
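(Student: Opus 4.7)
The plan is to compute the Euler characteristic on the left-hand side via an explicit finite resolution, namely the Chevalley--Eilenberg (Koszul) complex, and then recognize the answer as the alternating sum of binomial coefficients.

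First I would rewrite the Ext groups in a more convenient form. Giving $\Hom_{\mathbb C}(U,V)$ its natural $\mathfrak a$-module structure, one has the standard identification
$$\Ext^p_{\mathfrak a}(U,V) \;=\; H^p(\mathfrak a, \Hom_{\mathbb C}(U,V)),$$
where the right-hand side denotes Lie algebra cohomology of $\mathfrak a$ with coefficients in this finite-dimensional module. This reduces the problem to computing an Euler characteristic of Lie algebra cohomology of a finite-dimensional coefficient module.

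Next I would use the Chevalley--Eilenberg complex $C^\bullet(\mathfrak a, W) = \Hom_{\mathbb C}(\Lambda^\bullet \mathfrak a, W)$, which is a complex of length $\dim \mathfrak a$ whose cohomology computes $H^\bullet(\mathfrak a, W)$. Since this is a bounded complex of finite-dimensional spaces, its Euler characteristic agrees with that of its cohomology. Applying this with $W = \Hom_{\mathbb C}(U,V)$ gives
$$\sum_{p=0}^{\dim \mathfrak a} (-1)^p \dim \Ext^p_{\mathfrak a}(U,V) \;=\; \sum_{p=0}^{\dim \mathfrak a} (-1)^p \binom{\dim \mathfrak a}{p} \dim \Hom_{\mathbb C}(U,V).$$

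Finally I would invoke the binomial identity $\sum_{p=0}^{n}(-1)^p \binom{n}{p} = (1-1)^n = 0$, valid for $n \geq 1$. Since the hypothesis $\mathfrak a \neq 0$ guarantees $n = \dim \mathfrak a \geq 1$, the right-hand side vanishes, completing the proof. There is no serious obstacle here: the whole point is that the Chevalley--Eilenberg resolution provides a finite length resolution by modules whose underlying dimensions are the binomial coefficients, and the nonzeroness hypothesis on $\mathfrak a$ is exactly what is needed to make the alternating binomial sum collapse to zero.
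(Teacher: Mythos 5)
Your proof is correct and follows essentially the same route as the paper: identify $\Ext^p_{\mathfrak a}(U,V)$ with Lie algebra cohomology $H^p(\mathfrak a,\Hom_{\mathbb C}(U,V))$, compute the Euler characteristic from the Chevalley--Eilenberg (standard) complex, and conclude with the alternating binomial identity. No differences worth noting.
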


\begin{proof}
  By \cite[Ch.~I]{bw}, we have
  $$
\Ext_{\mathfrak a}^p (U,V) = H^p(\mathfrak a, \Hom_{\mathbb C}(U,V))
$$
for any $p \in \mathbb Z_+$. Therefore, we have
$$
\sum_{p=0}^{\dim \mathfrak a} (-1)^p \dim \Ext_{\mathfrak a}^p (U,V)
= \sum_{p=0}^{\dim \mathfrak a} (-1)^p
 \dim H^p(\mathfrak a, \Hom_{\mathbb C}(U,V)).
 $$
 On the other hand, for any finite-dimensional representation $F$ of
 $\mathfrak a$, we have
\begin{multline*}
\sum_{p=0}^{\dim \mathfrak a} (-1)^p \dim H^p(\mathfrak a,F) =
\sum_{p=0}^{\dim \mathfrak a} (-1)^p \dim \Hom_{\mathbb C}(\bigwedge^p \mathfrak a, F) \\
= \left(\sum_{p=0}^{\dim \mathfrak a} (-1)^p \dim \bigwedge^p \mathfrak a \right)
\cdot \dim  F
\end{multline*}
  using the standard complex of Lie algebra cohomology.
  Finally, we have
  $$
  \sum_{p=0}^{\dim \mathfrak a} (-1)^p \dim \bigwedge^p \mathfrak a =
  \sum_{p=0}^{\dim \mathfrak a} (-1)^p \binom  {\dim \mathfrak a} p
  = (1 - 1)^{\dim \mathfrak a} = 0,
  $$
what implies our assertion.
\end{proof}

Consider now the homological pairing on a $\sigma$-stable Cartan
subalgebra $\mathfrak c$. Assume that $\mathfrak a \ne 0$. Let $V$
and $V'$ be two irreducible finite-dimensional $(\mathfrak
c,T)$-modules. Then, by Schur lemma, $\mathfrak a$ acts on $V$ and
$V'$ by linear forms $\mu, \mu' \in \mathfrak a^*$. The restrictions of
$V$ and $V'$ to $(\mathfrak t,T)$ are irreducible modules which we
denote by the same symbol. By \cite[Ch.~I]{bw}, we have
\begin{multline*}
\Ext_{(\mathfrak c,T)}^n (V,V') = \bigoplus_{p+q = n} \Ext_{(\mathfrak t,T)}^p (V,V')
\otimes \Ext_{\mathfrak a}^q (\mathbb C_{\mu},\mathbb C_{\mu'}) \\
= \Hom_{(\mathfrak t,T)}(V,V') \otimes
\Ext_{\mathfrak a}^n (\mathbb C_{\mu},\mathbb C_{\mu'}),
\end{multline*}
 for any $n \in \mathbb Z_+$, since $\mathcal A(\mathfrak t,T)$ is
 semisimple. Therefore, by \ref{lavan}, we have
$$
\langle [V], [V'] \rangle_{(\mathfrak c,T)} =
\dim \Hom_{(\mathfrak t,T)}(V,V') \cdot \sum_{p=0}^{\dim \mathfrak a} (-1)^p
\Ext_{\mathfrak a}^p (\mathbb C_{\mu},\mathbb C_{\mu'}) = 0.
$$

Hence, we get the following elementary vanishing result.

\begin{lem}
  \label{van}
If $\mathfrak a \ne 0$, the homological pairing on $\K (\mathfrak
c,T)$ is zero.
\end{lem}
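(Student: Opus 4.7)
The plan is to reduce to pairs of irreducible modules and then invoke the computation already carried out in the paragraph preceding the lemma. Since $\mathcal A(\mathfrak c,T)$ is the category of finite-dimensional $(\mathfrak c,T)$-modules, the Grothendieck group $\K(\mathfrak c,T)$ is generated as an abelian group by classes $[V]$ of irreducible finite-dimensional $(\mathfrak c,T)$-modules. The homological pairing is biadditive, so it suffices to show that $\langle [V] \mid [V'] \rangle_{(\mathfrak c,T)} = 0$ for every pair $V,V'$ of irreducible finite-dimensional $(\mathfrak c,T)$-modules.

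Fix such $V,V'$. By Schur's lemma, $\mathfrak a$ acts on $V$ and $V'$ through linear characters $\mu,\mu' \in \mathfrak a^*$, so that $V \simeq V_{\mathfrak t}\otimes \mathbb C_\mu$ and $V' \simeq V'_{\mathfrak t}\otimes \mathbb C_{\mu'}$ as $(\mathfrak c,T) = (\mathfrak t,T)\times \mathfrak a$-modules, where $V_{\mathfrak t}, V'_{\mathfrak t}$ denote the restrictions to $(\mathfrak t,T)$. The Künneth-type decomposition recalled from \cite[Ch.~I]{bw},
\[
\Ext^n_{(\mathfrak c,T)}(V,V') \;=\; \bigoplus_{p+q=n} \Ext^p_{(\mathfrak t,T)}(V_{\mathfrak t},V'_{\mathfrak t}) \otimes \Ext^q_{\mathfrak a}(\mathbb C_\mu,\mathbb C_{\mu'}),
\]
collapses because $\mathcal A(\mathfrak t,T)$ is semisimple, leaving only the $p=0$ summand:
\[
\Ext^n_{(\mathfrak c,T)}(V,V') \;=\; \Hom_{(\mathfrak t,T)}(V_{\mathfrak t},V'_{\mathfrak t}) \otimes \Ext^n_{\mathfrak a}(\mathbb C_\mu,\mathbb C_{\mu'}).
\]

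By the Euler--Poincar\'e description of the homological pairing (the proposition in Section \ref{homological}) together with this identity, the pairing factors as
\[
\langle [V] \mid [V'] \rangle_{(\mathfrak c,T)} \;=\; \dim \Hom_{(\mathfrak t,T)}(V_{\mathfrak t},V'_{\mathfrak t}) \cdot \sum_{p=0}^{\dim \mathfrak a} (-1)^p \dim \Ext^p_{\mathfrak a}(\mathbb C_\mu,\mathbb C_{\mu'}).
\]
Since $\mathfrak a \neq 0$ by hypothesis, Lemma \ref{lavan} applied to the (abelian) Lie algebra $\mathfrak a$ and the one-dimensional modules $\mathbb C_\mu,\mathbb C_{\mu'}$ forces the second factor to vanish, and hence the whole product is zero. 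This is exactly the vanishing claim.

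There is no serious obstacle here: all the homological algebra input (Künneth, finite cohomological dimension, the Euler-characteristic identity for abelian Lie algebras) is already assembled above, and the remaining step is simply the observation that irreducibles generate the Grothendieck group so biadditivity suffices.
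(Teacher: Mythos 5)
Your proof is correct and follows exactly the route the paper takes in the paragraph preceding the lemma: reduce by biadditivity to irreducibles, split off the $\mathfrak a$-action via Schur's lemma, apply the K\"unneth decomposition of $\Ext$ (which collapses by semisimplicity of $\mathcal A(\mathfrak t,T)$), and invoke Lemma~\ref{lavan}. The only cosmetic difference is that you make the reduction-to-irreducibles step explicit, which the paper leaves implicit.
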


\subsection{Localization and Grothendieck groups of Harish-Chandra modules}
To calculate the homological pairing on $\K(\mathfrak g,K)$ we have to
invoke the geometric classification of irreducible Harish-Chandra
modules. We use freely the notation from \cite{hmsw1} and
\cite{penrose}.

Let $\mathcal Z(\mathfrak g)$ be the center of the enveloping algebra
$\mathcal U(\mathfrak g)$ of $\mathfrak g$. Using Harish-Chandra
homomorphism, the maximal ideals in $\mathcal Z(\mathfrak g)$
correspond to the orbits of the Weyl group $W$ in the abstract Cartan
algebra $\mathfrak h$ of $\mathfrak g$. For an orbit $\theta$ we
denote by $I_\theta$ the corresponding maximal ideal in $\mathcal
Z(\mathfrak g)$. Let $\mathcal U_\theta$ be the quotient of
$\mathcal U(\mathfrak g)$ by the two-sided ideal generated by
$I_\theta$. We denote by $\mathcal A(\mathcal U_\theta,K)$ the full
subcategory of $\mathcal A(\mathfrak g,K)$ consisting of
Harish-Chandra modules with infinitesimal character corresponding to
$I_\theta$. Let $\K(\mathcal U_\theta,K)$ be the Grothendieck group of
$\mathcal A(\mathcal U_\theta,K)$. Then, we have the direct sum
decomposition
$$
\K(\mathfrak g,K) = \bigoplus_{\theta} \K(\mathcal U_\theta,K).
$$
By Wigner's lemma \cite[Ch.~I]{bw}, the subgroups $\K(\mathcal
U_\theta,K)$ are mutually orthogonal with respect to the homological
pairing.  Therefore, we have to calculate it on these subgroups only.

Fix a Weyl group orbit $\theta$. Then there exists a $\lambda$ in this
orbit which is antidominant. Let $\mathcal M_{coh}(\mathcal
D_\lambda,K)$ be the category of coherent $K$-equivariant $\mathcal
D_\lambda$-modules on the flag variety $X$ of $\mathfrak g$.  The
objects of $\mathcal M_{coh}(\mathcal D_\lambda,K)$ are called {\em
  Harish-Chandra sheaves}. Since Harish-Chandra sheaves are holonomic
\cite[Thm.~6.1]{penrose}, they are of finite length.

The functor of global sections $\Gamma(X,-)$ is an exact functor from
the category $\mathcal M_{coh}(\mathcal D_\lambda,K)$ into $\mathcal
A(\mathcal U_\theta,K)$ since $\lambda$ is antidominant. More
precisely, $\mathcal A(\mathcal U_\theta,K)$ is equivalent to a
quotient category of $\mathcal M_{coh}(\mathcal D_\lambda,K)$
(cf.~\cite[3.8]{penrose}).  Let $\K(\mathcal D_\lambda,K)$ be the
Grothendieck group of $\mathcal M_{coh}(\mathcal D_\lambda,K)$. The
above statement implies that $\K(\mathcal U_\theta,K)$ is a quotient
group of $\K (\mathcal D_\lambda,K)$.

 It is easy to describe all irreducible Harish-Chandra sheaves. The group
 $K$ has finitely many orbits in $X$. Let $Q$ be a $K$-orbit in
 $X$. There exist a finite family of $K$-equivariant connections on
 $Q$ compatible with $\lambda + \rho$. For such connection $\tau$, we
 denote by $\mathcal I(Q,\tau)$ the $\mathcal D$-module direct image
 of $\tau$ under the natural inclusion of $Q$ into $X$. Then $\mathcal
 I(Q,\tau)$ is the {\em standard Harish-Chandra sheaf} attached to the
 geometric data $(Q,\tau)$.  It has a unique irreducible subobject
 $\mathcal L(Q,\tau)$. All irreducible objects in $\mathcal
 M_{coh}(\mathcal D_\lambda,K)$ are isomorphic to $\mathcal L(Q,\tau)$
 for some geometric data $(Q,\tau)$.

Therefore, the classes $[\mathcal L(Q,\tau)]$ form a basis of
$\K(\mathcal D_\lambda,K)$. Since the other composition factors of
$\mathcal I(Q,\tau)$ correspond to $K$-orbits in the boundary of $Q$,
we immediately see that classes of $[\mathcal I(Q,\tau)]$ also form a
basis of $\K(\mathcal D_\lambda,K)$.

We call $\Gamma(X,\mathcal I(Q,\tau))$ the {\em standard
  Harish-Chandra module} attached to geometric data $(Q,\tau)$. The
above discussion implies that classes of standard
Harish-Chandra modules, for all geometric data $(Q,\tau)$, generate
$\K(\mathcal U_\theta,K)$.

Let $\theta\check{\ }$ be the orbit of $-\lambda$. Then the duality $U
\longmapsto U\check{\ }$ is an antiequivalence of the category
$\mathcal A(\mathcal U_\theta,K)$ with $\mathcal A(\mathcal
U_{\theta\check{\ }},K)$. Therefore, the classes $[\Gamma(X, \mathcal
  I(Q,\tau))\check{\ }]$ generate $\K(\mathcal U_{\theta\check{\ }},K)$.

Let $x$ be point in $Q$. Let $\mathfrak b_x$ be corresponding Borel
subalgebra in $\mathfrak g$. It contains a $\sigma$-stable Cartan
subalgebra $\mathfrak c$ and all such Cartan subalgebras are
$K$-conjugate \cite [Lemma 5.3]{penrose}. Let $\mathfrak n =
[\mathfrak b_x,\mathfrak b_x]$.  Denote by $R^+$ the set of positive
roots determined by $\mathfrak n$.

Let $V$ be the irreducible representation of $(\mathfrak c,T)$ on the
geometric fiber $T_x(\tau)$ of $\tau$ at $x$. Let $\Omega_X$ be the
cotangent bundle of $X$ and $T_x(\Omega_X)$ its geometric fiber at
$x$. Then the duality theorem \cite[Thm.~4.3]{hmsw1} states that
$$
\Gamma(X,\mathcal I(Q,\tau))\check{\ } = R^sI(\mathfrak c, R^+,
V\check{\ } \otimes T_x(\Omega_X))
$$
for $s = \dim (\mathfrak k \cap \mathfrak n)$. In addition,
$R^pI(\mathfrak c, R^+, V\check{\ } \otimes T_x(\Omega_X)) = 0$ for $p
\ne s$.\footnote{Actually, \cite[Thm.~4.3]{hmsw1} assumes that $G_0$
  is connected semisimple Lie group. In \cite[Appendix B]{hmsw1}
  there is an explanation how to extend it to all groups in the
  Harish-Chandra class.}

By \ref{fro-pairing}, this immediately implies that
\begin{multline}
  \label{calc1}
  \langle [U] \mid [\Gamma(X,\mathcal I(Q,\tau))\check{\ }]
  \rangle_{(\mathfrak g,K)} =
  \langle [U] \mid [R^sI(\mathfrak c,R^+,V\check{\ } \otimes T_x(\Omega_X))]
\rangle_{(\mathfrak g,K)} \\
= (-1)^s \langle [D(U)^\cdot] \mid [RI(\mathfrak c,R^+, D(V\check{\ }
  \otimes T_x(\Omega_X))^\cdot)]\rangle_{(\mathfrak g,K)} \\
= (-1)^s \langle [H_\bullet(\mathfrak n, D(U)^\cdot)] \mid D(V\check{\ }
\otimes T_x(\Omega_X))^\cdot]\rangle_{(\mathfrak c,T)} \\
  =
  (-1)^s \sum_{p \in \mathbb Z} (-1)^p \langle [H_p(\mathfrak n, U)] \mid
  [V\check{\ }
\otimes T_x(\Omega_X)]\rangle_{(\mathfrak c,T)} 
\end{multline}
for any $U$ in $\mathcal A(\mathfrak g,K)$. Hence, by \ref{van}, the
homological pairing on $\K(\mathfrak g,K)$ vanishes if the orbit $Q$ in
the second variable is attached to a Cartan subalgebra $\mathfrak c$
with $\mathfrak a \ne \{0\}$.

\subsection{Unequal rank case}
If $\rank G_0 > \rank K_0$, any $\sigma$-stable Cartan subalgebra in
$\mathfrak g$ has $\mathfrak a \ne \{0\}$. Hence, we see that the
homological pairing vanishes on $\K(\mathfrak g,K)$ i.e., we have the
following generalization of \ref{van}.

\begin{thm}
\label{van2}
  If $\rank G_0 > \rank K_0$, the homological pairing vanishes on $\K
  (\mathfrak g,K)$.
\end{thm}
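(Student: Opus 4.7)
The plan is to reduce the theorem to the spanning statement already established for standard Harish-Chandra modules, combined with the dimension count that under unequal rank every $\sigma$-stable Cartan has nonzero split part. First I would observe that if $\rank G_0 > \rank K_0$, then every $\sigma$-stable Cartan subalgebra $\mathfrak c = \mathfrak t \oplus \mathfrak a$ of $\mathfrak g$ must have $\mathfrak a \ne 0$: since $\mathfrak t \subset \mathfrak k$, we have $\dim_{\mathbb C} \mathfrak t \le \rank K_0 < \rank G_0 = \dim_{\mathbb C} \mathfrak c$, forcing $\dim \mathfrak a > 0$. This is the only geometric input and requires no orbit-by-orbit analysis.

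Next I would recall the generation statement from the previous subsection. The direct sum decomposition $\K(\mathfrak g,K) = \bigoplus_{\theta} \K(\mathcal U_\theta,K)$, together with the fact that classes of standard Harish-Chandra modules generate each $\K(\mathcal U_\theta,K)$ and that the duality antiequivalence $U \mapsto U\check{\ }$ interchanges $\mathcal A(\mathcal U_\theta,K)$ with $\mathcal A(\mathcal U_{\theta\check{\ }},K)$, implies that the classes $[\Gamma(X,\mathcal I(Q,\tau))\check{\ }]$ (ranging over all geometric data $(Q,\tau)$ at all antidominant $\lambda$) span $\K(\mathfrak g,K)$. By biadditivity in the second slot, it is therefore enough to show
$$
\langle [U] \mid [\Gamma(X,\mathcal I(Q,\tau))\check{\ }]\rangle_{(\mathfrak g,K)} = 0
$$
for every $U \in \mathcal A(\mathfrak g,K)$ and every geometric datum $(Q,\tau)$.

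Finally I would apply formula \eqref{calc1} verbatim: the right-hand side expresses the pairing above (up to a sign) as the finite alternating sum $\sum_p (-1)^p \langle [H_p(\mathfrak n,U)] \mid [V\check{\ } \otimes T_x(\Omega_X)] \rangle_{(\mathfrak c,T)}$, where $\mathfrak c$ is the $\sigma$-stable Cartan contained in the Borel $\mathfrak b_x$ at a chosen point $x \in Q$. By the dimension count of the first paragraph, this $\mathfrak c$ has nontrivial split part $\mathfrak a$, and so Lemma \ref{van} forces every term in the sum to vanish. This establishes the required vanishing on the generating set and hence on all of $\K(\mathfrak g,K)$.

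I do not expect any substantial obstacle here: the heavy lifting has already been done in constructing the Frobenius-reciprocity identity \eqref{calc1} and the vanishing Lemma \ref{van}, and the rank hypothesis enters only through the elementary dimension inequality above. The theorem is thus a direct corollary of results already in hand, and the proof should be a few lines assembling them.
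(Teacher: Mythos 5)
Your proposal is correct and follows essentially the same route as the paper: both use the spanning of $\K(\mathfrak g,K)$ by classes $[\Gamma(X,\mathcal I(Q,\tau))\check{\ }]$, formula \eqref{calc1} reducing the pairing to one on a $\sigma$-stable Cartan $\mathfrak c = \mathfrak t \oplus \mathfrak a$, and Lemma \ref{van} together with the observation that every such $\mathfrak c$ has $\mathfrak a \ne 0$ when $\rank G_0 > \rank K_0$. The only difference is that you spell out the elementary dimension inequality $\dim \mathfrak t \le \rank K_0 < \rank G_0 = \dim \mathfrak c$ which the paper leaves implicit.
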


\subsection{Symmetry of Euler characteristic of Lie algebra homology}
It remains to treat the case $\rank G_0 = \rank K_0$. In this case,
the group $G_0$ has a compact Cartan subgroup contained in $K_0$.  All
such Cartan subgroups are conjugate by $K_0$.

We fix a compact Cartan subgroup $T_0$. We denote by $\mathfrak t$ the
complexification of its Lie algebra and by $T$ the complexification of
$T_0$. We denote by $R$ the root system of $(\mathfrak g,\mathfrak t)$
in $\mathfrak t^*$.

Clearly, the category $\mathcal A(\mathfrak t,T)$ is just the category
of finite-dimensional algebraic representations of $T$, hence it is
semisimple. In addition, $\K (\mathfrak t,T)$ is a ring with
multiplication given by $[V] \cdot [V'] = [V \otimes_{\mathbb C} V']$
for finite-dimensional algebraic representations $V$ and $V'$ of $T$.
The ring $\K(\mathfrak t,T)$ contains as a subring the additive
subgroup generated by all characters $e^\mu$ of $T$ where $\mu$ is a
weight in the root lattice of $R$. Moreover, the subgroup $W_0$ of the
Weyl group $W$ acts naturally on $\K (\mathfrak t,T)$.

Clearly, the homological pairing on $\K (\mathfrak t,T)$ is invariant
for the action of $W_0$.  Moreover, it is invariant for multiplication
by $e^\mu$ for any weight $\mu$, i.e., we have
$$
\langle A \cdot e^\mu \mid B \cdot e^\mu \rangle_{(\mathfrak t,T)} =
\langle A \mid B \rangle_{(\mathfrak t,T)}
$$
for any $A$, $B$ in $\K(\mathfrak t,T)$.

Let $R^+$ be a set of positive roots in $R$. Let $\rho$ be the half
sum of roots in $R^+$. Denote by $\mathfrak n$ the nilpotent Lie
algebra spanned by root subspaces $\mathfrak g_\alpha$ for roots
$\alpha \in R^+$.

Our calculation is based on remarkable symmetry properties of the
Euler characteristic of Lie algebra homology for $\mathfrak n$ of
Harish-Chandra modules. They follow from the Osborne conjecture
\cite{osborne}.\footnote{Actually, we need just a special case for
  compact Cartan subgroups \cite[7.27]{osborne}.}  Let $U$ be a
Harish-Chandra module and let $\Theta_U$ be its character. By the
Osborne conjecture, we have
$$
\Theta_U = \frac { \sum_{p \in \mathbb Z} (-1)^p \Theta_{H_p(\mathfrak n,U)}}
  {\prod_{\alpha \in R^+} (1 - e^{\alpha})}
$$
on the regular elements $T_0'$ in the compact Cartan
subgroup $T_0$.

First we need a simple symmetry property of the denominator in this
formula. Let $w \in W$, then $\rho - w\rho$ is a sum of all roots in
$R^+ \cap (-wR^+)$, hence it defines a character $e^{w\rho - \rho}$ of
$T_0$.

 We denote by $\epsilon$ the sign representation of $W$.

\begin{lem}
  \label{weyldenom}
  For any $w \in W$ we have
$$
\prod_{\alpha \in wR^+} (1 - e^{\alpha}) = \epsilon(w) e^{w \rho - \rho}
    \prod_{\alpha \in R^+} (1 - e^{\alpha}).
$$
\end{lem}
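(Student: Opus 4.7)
The plan is to split the product on the left according to whether a root $\alpha \in wR^+$ lies in $R^+$ or in $-R^+$, then use the elementary identity $1 - e^{-\beta} = -e^{-\beta}(1 - e^{\beta})$ to rewrite the second piece in terms of roots in $R^+$.

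More precisely, I would decompose $wR^+ = (wR^+ \cap R^+) \sqcup (wR^+ \cap (-R^+))$ and correspondingly write
\[
\prod_{\alpha \in wR^+}(1 - e^{\alpha}) = \prod_{\alpha \in wR^+ \cap R^+}(1 - e^{\alpha}) \cdot \prod_{\alpha \in wR^+ \cap (-R^+)}(1 - e^{\alpha}).
\]
In the second factor I substitute $\alpha = -\beta$ with $\beta \in R^+ \cap (-wR^+)$ and apply the identity above to produce a factor of $-e^{-\beta}$ for each such $\beta$, together with the missing factors $(1 - e^{\beta})$. Reassembling, one factor per root in $R^+ \cap (-wR^+)$ is converted from $(1 - e^{-\beta})$ to $(1 - e^\beta)$, so the two products combine to $\prod_{\alpha \in R^+}(1 - e^\alpha)$.

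What is left is to identify the scalar. The sign contributed is $(-1)^N$ with $N = |R^+ \cap (-wR^+)| = \ell(w)$, which equals $\epsilon(w)$. The exponential contributed is $e^{-\sigma}$ with $\sigma = \sum_{\beta \in R^+ \cap (-wR^+)} \beta$, and this is precisely $\rho - w\rho$ (as recalled just before the lemma, using $w\rho = \tfrac12 \sum_{\beta \in wR^+}\beta$ and comparing with $\rho = \tfrac12 \sum_{\alpha \in R^+} \alpha$ after splitting both sums by $wR^+ \cap (\pm R^+)$). Hence $e^{-\sigma} = e^{w\rho - \rho}$, and the identity follows.

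There is no real obstacle; the only mildly delicate point is bookkeeping the bijection $\alpha \leftrightarrow -\alpha$ between $wR^+ \cap (-R^+)$ and $R^+ \cap (-wR^+)$ and checking that the sign and exponent come out as $\epsilon(w)$ and $e^{w\rho-\rho}$ rather than their inverses. If one prefers, the lemma can instead be proved by induction on $\ell(w)$: the case of a simple reflection $s$ with simple root $\alpha_s$ is the direct check $\prod_{\beta \in sR^+}(1-e^\beta) = -e^{-\alpha_s}\prod_{\beta \in R^+}(1-e^\beta)$, and the inductive step follows from the cocycle-type identity $w(\rho) - \rho + w(s\rho - \rho) = ws\rho - \rho$ together with $\epsilon(ws) = -\epsilon(w)$.
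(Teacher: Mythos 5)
Your argument is correct and coincides with the paper's own proof: split $wR^+$ into $wR^+\cap R^+$ and $wR^+\cap(-R^+)$, rewrite the second factor via $\alpha\mapsto-\alpha$ and the identity $1-e^{-\alpha}=-e^{-\alpha}(1-e^{\alpha})$, and identify the resulting sign and exponential as $\epsilon(w)$ and $e^{w\rho-\rho}$ using $\rho-w\rho=\sum_{\beta\in R^+\cap(-wR^+)}\beta$. The alternative induction on $\ell(w)$ you sketch would also work but is not needed.
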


\begin{proof}
  We have
\begin{multline*}
\prod_{\alpha \in wR^+} (1 - e^{\alpha})
= \prod_{\alpha \in wR^+ \cap R^+} (1 - e^{\alpha})
\prod_{\alpha \in wR^+ \cap (-R^+)} (1 - e^{\alpha})\\
= \prod_{\alpha \in wR^+ \cap R^+} (1 - e^{\alpha})
\prod_{\alpha \in (- wR^+) \cap R^+} (1 - e^{-\alpha}) \\
 = \epsilon(w) \prod_{\alpha \in (- wR^+) \cap R^+}e^{-\alpha}  
  \prod_{\alpha \in  R^+} (1 - e^{\alpha})  
  = \epsilon(w) e^{w \rho - \rho}
    \prod_{\alpha \in R^+} (1 - e^{\alpha}).
\end{multline*}
\end{proof}

Let $\mathfrak n_w$ be the nilpotent Lie algebra spanned by the root
subspaces corresponding to the roots in $w R^+$ for any $w \in W$.

\begin{lem}
  \label{antisym}
  Let $U$ be a Harish-Chandra module.  
  \begin{enumerate}
\item[(i)] For any $w \in W_0$, we have
  $$
  w \left(\sum_{ p \in \mathbb Z} (-1)^p [H_p(\mathfrak n,U)]\right)
  = \epsilon(w) \sum_{ p \in \mathbb Z} (-1)^p [H_p(\mathfrak n,U)]
    \cdot e^{w \rho - \rho} 
$$
    in $\K (\mathfrak t,T)$.
  \item[(ii)]
    For any $w \in W$, we have 
  $$
  \sum_{p \in \mathbb Z_+} (-1)^p [H_p(\mathfrak n,U)]
  = \epsilon(w) \sum_{p \in \mathbb Z_+} (-1)^p [H_p(\mathfrak n_w,U)] \cdot e^{\rho - w\rho}.
  $$
\end{enumerate}
\end{lem}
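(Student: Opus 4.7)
The plan is to deduce part (ii) directly from the Osborne conjecture combined with the Weyl denominator symmetry of Lemma \ref{weyldenom}, and then derive part (i) from (ii) via a $W_0$-equivariance argument for Lie algebra homology.

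For (ii), the key observation is that the Osborne conjecture applies with any choice of positive roots. Replacing $R^+$ by $wR^+$ (so that $\mathfrak n$ is replaced by $\mathfrak n_w$), we obtain two expressions for $\Theta_U$ on $T_0'$:
$$
\Theta_U
= \frac{\sum_p (-1)^p \Theta_{H_p(\mathfrak n, U)}}{\prod_{\alpha \in R^+}(1 - e^\alpha)}
= \frac{\sum_p (-1)^p \Theta_{H_p(\mathfrak n_w, U)}}{\prod_{\alpha \in wR^+}(1 - e^\alpha)}.
$$
Cross-multiplying, applying Lemma \ref{weyldenom} to rewrite $\prod_{\alpha \in wR^+}(1-e^\alpha)$ in terms of $\prod_{\alpha \in R^+}(1-e^\alpha)$, and cancelling the common Weyl denominator yields an identity of finite integer combinations of characters $e^\mu$ of $T_0$. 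Since such characters form a $\mathbb Z$-basis of $\K(\mathfrak t, T)$, this identity holds in $\K(\mathfrak t, T)$, and a trivial rearrangement (multiplying through by $\epsilon(w) e^{\rho - w\rho}$ and using $\epsilon(w)^2 = 1$) puts it in the form stated in (ii).

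For (i), pick any $w \in W_0$ and lift it to an element $\tilde w \in N_{K_0}(T_0) \subset K$. Since $U$ is a $(\mathfrak g, K)$-module, the linear action of $\tilde w$ on $U$ intertwines the action of $\mathfrak n$ with that of $\Ad(\tilde w)\mathfrak n = \mathfrak n_w$. Applying $\tilde w$ termwise to the Koszul complex $\bigwedge^\bullet \mathfrak n \otimes U$ gives an isomorphism of complexes onto $\bigwedge^\bullet \mathfrak n_w \otimes U$ which, relative to the $T$-actions on both sides, is equivariant up to the expected twist by $w$. Passing to homology, this yields $[H_p(\mathfrak n_w, U)] = w \cdot [H_p(\mathfrak n, U)]$ in $\K(\mathfrak t, T)$. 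Substituting this equality into (ii), applied with the same $w \in W_0 \subset W$, and using $(e^{\rho - w\rho})^{-1} = e^{w\rho - \rho}$ together with $\epsilon(w)^2 = 1$, gives exactly (i).

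The main obstacle is the $T$-equivariance bookkeeping in step (i): one must verify that the Koszul-complex isomorphism induced by $\tilde w$ is $T$-equivariant up to the advertised $w$-twist, so that the identification $[H_p(\mathfrak n_w, U)] = w \cdot [H_p(\mathfrak n, U)]$ genuinely holds in the Grothendieck group $\K(\mathfrak t, T)$ and not merely after forgetting the $T$-structure. Once this compatibility is in hand, both parts of the lemma follow by formal manipulation of characters.
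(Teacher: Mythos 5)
Your proposal is correct; part (ii) reproduces the paper's argument verbatim, while part (i) takes a genuinely different route. The paper proves (i) directly from the Osborne formula together with the conjugation-invariance $\Theta_U(t^{w^{-1}}) = \Theta_U(t)$ of the global character under $W_0$: twisting the numerator $\sum_p (-1)^p \Theta_{H_p(\mathfrak n,U)}$ by $w$, one writes it as $\Theta_U(t^{w^{-1}})\prod_{\alpha \in R^+}(1-e^\alpha(t^{w^{-1}}))$ and then applies Lemma \ref{weyldenom}. You instead establish the module-level equivariance $[H_p(\mathfrak n_w, U)] = w\cdot[H_p(\mathfrak n, U)]$ in $\K(\mathfrak t, T)$ for each $p$ separately, via conjugation of the Koszul complex $\bigwedge^\bullet\mathfrak n\otimes U$ by a lift $\tilde w\in N_{K_0}(T_0)$ of $w$, and then feed this into (ii). The $T$-equivariance bookkeeping you flag is the correct subtle point and works out: for $t\in T$ and $v$ a Koszul chain, $t\cdot(\tilde w\cdot v)=\tilde w\cdot((\tilde w^{-1}t\tilde w)\cdot v)$, so $\tilde w$ induces an isomorphism $H_p(\mathfrak n, U) \to H_p(\mathfrak n_w, U)$ equivariant up to the $w$-twist of the $T$-action, and this descends to $\K(\mathfrak t,T)$. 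Both proofs draw on the same underlying facts (the Osborne formula and the $K_0$-module structure on $U$, the latter being what makes $\Theta_U$ a $W_0$-invariant function of $T_0$ in the paper's version, and what supplies $\tilde w$ in yours). The paper's proof of (i) is marginally shorter and independent of (ii); your route is slightly longer but yields the degreewise equivariance $[H_p(\mathfrak n_w, U)] = w\cdot[H_p(\mathfrak n, U)]$ as a free byproduct, which is stronger than the Euler-characteristic identity actually asserted.
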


\begin{proof}
  The proof is based on the Osborne character formula.

(i) Since the character $\Theta_U$ is given by a function constant on the
conjugacy classes of regular elements, we see that $\Theta_U(t^w) =
\Theta_U(t)$ for any $t \in T_0'$ and $w \in W_0$.

By \ref{weyldenom}, we see that
\begin{multline*}
  \left(w \left(\sum_{ p \in \mathbb Z} (-1)^p \Theta_{H_p(\mathfrak
    n,U)}\right)\right)(t) = \Theta_U(t^{w^{-1}}) \prod_{\alpha \in
    R^+} (1 - e^{\alpha}(t^{w^{-1}})) \\
  = \Theta_U(t) \prod_{\alpha
    \in w R^+} (1 - e^{\alpha}(t))
  = \epsilon(w) e^{w \rho - \rho}(t)
  \Theta_U(t) \prod_{\alpha \in R^+} (1 - e^{\alpha}(t)) \\
  = \epsilon(w) e^{w \rho - \rho}(t)
  \sum_{ p \in \mathbb Z} (-1)^p \Theta_{H_p(\mathfrak n,U)}(t)
\end{multline*}
for any $t \in T_0$, and $(i)$ follows.

(ii) We can calculate $\Theta_U$ on $T_0'$ in two different ways
  $$
\Theta_U =  \frac { \sum_{p \in \mathbb Z} (-1)^p \Theta_{H_p(\mathfrak n,U)}}
      {\prod_{\alpha \in R^+} (1 - e^{\alpha})}
      =  \frac { \sum_{p \in \mathbb Z} (-1)^p \Theta_{H_p(\mathfrak n_w,U)}}
  {\prod_{\alpha \in wR^+} (1 - e^{\alpha})} .
  $$
  Therefore, we have
  $$
  \left( \sum_{p \in \mathbb Z} (-1)^p \Theta_{H_p(\mathfrak n,U)} \right)
  \left( {\prod_{\alpha \in wR^+} (1 - e^{\alpha})}\right)
  = \left(\sum_{p \in \mathbb Z} (-1)^p \Theta_{H_p(\mathfrak n_w,U)} \right) \left(
  {\prod_{\alpha \in R^+} (1 - e^{\alpha})}\right)
  $$
  on $T_0$. By \ref{weyldenom}, this implies
  $$
  \sum_{p \in \mathbb Z} (-1)^p \Theta_{H_p(\mathfrak n,U)}
    = \epsilon(w)e^{\rho - w\rho}
    \left( \sum_{p \in \mathbb Z} (-1)^p \Theta_{H_p(\mathfrak n_w,U)} \right).
  $$
\end{proof}

\subsection{Euler characteristic of Lie algebra homology of standard
Harish-Chandra modules} In this section we want to discuss the
formulas for the Euler characteristic of Lie algebra homology for
$\mathfrak n$ of standard Harish-Chandra modules $\Gamma(X,\mathcal
I(Q,\tau))$.

Since we are in the equal rank case, by \cite[5.9]{penrose}, an orbit
$Q$ is closed if and only if it is attached to the Cartan subalgebra
$\mathfrak t$. More precisely, any closed orbit $Q$ contains a Borel
subalgebra $\mathfrak b_w = \mathfrak t \oplus \mathfrak n_w$ for some
$w \in W$ and two such Borel subalgebras $\mathfrak b_u$ and
$\mathfrak b_v$ lie in the same orbit if and only if $u$ and $v$ are
in the same right $W_0$-coset in $W$.

Let $x_w$ be the point in the flag variety corresponding to the Borel
subalgebra $\mathfrak b_w$. As explained in \cite[p.~303]{hmsw1}, to
each $x_w$, one attaches a natural isomorphism of the dual $\mathfrak
h^*$ of the abstract Cartan algebra $\mathfrak h$ with $\mathfrak t^*$
which we call the {\em specialization} at $x_w$. Clearly, the
specializations at $x$ and $x_w$ differ by the action of $w$.

Assume first that $\lambda \in \mathfrak h^*$ is regular. Let $U$ be
a Harish-Chandra module in $\mathcal A(\mathcal U_\theta,K)$.
  
As we remarked before, Lie algebra homology groups $H_p(\mathfrak
n,U)$, $p \in \mathbb Z_+$, are finite-dimensonal representations of
$T$. Moreover, we have
$$
H_p(\mathfrak n,U) = \bigoplus_{w \in W} H_p(\mathfrak n,U)_{(w\lambda + \rho)}
$$
where $\mathfrak t$ acts on $H_p(\mathfrak n,U)_{(w\lambda +
  \rho)}$ via the specialization of $w\lambda+\rho$ \cite[Ch.~3,
  Cor. 2.4]{book}.

By \cite[Ch.~3, Cor.~2.6]{book}, the derived geometric fibers
$LT_{x_w}(\mathcal I(Q,\tau))$ of $\mathcal I(Q,\tau)$ at the point
$x_w$ correspond to $(\lambda + \rho)$-components of Lie algebra
homology for $\mathfrak n_w$ of $\Gamma(X,\mathcal I(Q,\tau))$ under
the specialization of $\lambda + \rho$ at $x_w$. Therefore, by
\ref{antisym}.(ii), calculating the $(\lambda + \rho)$-components of
Euler characteristic of Lie algebra homology for $\mathfrak n_w$ for
all $w \in W$, gives us the formula for Euler characteristic of Lie
algebra homology for $\mathfrak n$ of $\Gamma(X,\mathcal I(Q,\tau))$.

  First we consider the case where $Q$ is not closed in $X$.
  
\begin{lem}
\label{stdvan}
  Assume that the orbit $Q$ is not closed. Then
  $$
\sum_{p\in \mathbb Z} (-1)^p [H_p(\mathfrak n,\Gamma(X,\mathcal I(X,Q)))] = 0.
  $$
\end{lem}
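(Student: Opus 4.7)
The plan is to apply Frobenius reciprocity (Proposition~\ref{fro-pairing}) with $\mathfrak{c}$ taken to be the compact Cartan $\mathfrak{t}$ itself, combined with the pairing vanishing already derived in the lines following~\eqref{calc1}. By Lemma~\ref{antisym}(ii) it suffices to prove the claim with $\mathfrak{n}$ replaced by $\mathfrak{n}_w$ for some (equivalently, every) $w \in W$, and I exploit the freedom to let $w$ vary. Specializing Proposition~\ref{fro-pairing} to $\mathfrak{c} = \mathfrak{t}$, the positive system $wR^+$, and a finite-dimensional $(\mathfrak{t}, T)$-module $V$ yields
\begin{equation*}
\bigl\langle [\Gamma(X, \mathcal I(Q, \tau))] \bigm| [RI(\mathfrak{t}, wR^+, V)] \bigr\rangle_{(\mathfrak{g}, K)} = \bigl\langle [H_\bullet(\mathfrak{n}_w, \Gamma(X, \mathcal I(Q, \tau)))] \bigm| [V] \bigr\rangle_{(\mathfrak{t}, T)}.
\end{equation*}
I take $V = V' \check{\ } \otimes T_{x_w}(\Omega_X)$, where $V'$ is the fiber at $x_w$ of a $K$-equivariant connection $\tau'$ on the closed orbit $Q'_w \ni x_w$ compatible with $\lambda + \rho$; the duality theorem invoked in the derivation of~\eqref{calc1} then identifies $[RI(\mathfrak{t}, wR^+, V)] = (-1)^{s_w} [\Gamma(X, \mathcal I(Q'_w, \tau'))\check{\ }]$ in $\K(\mathfrak{g}, K)$.

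The left-hand pairing then becomes $(-1)^{s_w} \langle [\Gamma(X, \mathcal I(Q, \tau))] \mid [\Gamma(X, \mathcal I(Q'_w, \tau'))\check{\ }] \rangle_{(\mathfrak{g}, K)}$. The duality symmetry $\langle [A] \mid [B] \rangle_{(\mathfrak{g}, K)} = \langle [B\check{\ }] \mid [A\check{\ }] \rangle_{(\mathfrak{g}, K)}$---an immediate consequence of the antiequivalence $\check{\ }$ on $\mathcal A(\mathfrak{g}, K)$ yielding $\Ext^p(A, B) \cong \Ext^p(B\check{\ }, A\check{\ })$---rewrites this as $(-1)^{s_w} \langle [\Gamma(X, \mathcal I(Q'_w, \tau'))] \mid [\Gamma(X, \mathcal I(Q, \tau))\check{\ }] \rangle_{(\mathfrak{g}, K)}$, which vanishes by the computation immediately following~\eqref{calc1}, since $Q$ is not closed and Lemma~\ref{van} applies to its associated Cartan.

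Thus the right-hand Frobenius pairing is zero for every $w \in W$ and the chosen $V$. Semisimplicity of $\mathcal A(\mathfrak{t}, T)$ makes this pairing a weight-multiplicity pairing, so the vanishing pins down the coefficient of a specific $T$-weight (namely that of $V$) in $[H_\bullet(\mathfrak{n}_w, \Gamma(X, \mathcal I(Q, \tau)))]$; Lemma~\ref{antisym}(ii) then transports this to the vanishing of the corresponding weight coefficient in $[H_\bullet(\mathfrak{n}, \Gamma(X, \mathcal I(Q, \tau)))]$. As $w$ ranges over $W$ the tested weights will exhaust the support of this Euler characteristic---a single $W$-orbit in $\mathfrak{t}^*$ indexed by the points $x_w$---forcing every coefficient, and hence the Euler characteristic itself, to vanish. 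The principal obstacle I anticipate is the weight bookkeeping in this final step: verifying that the specializations at the various $x_w$, combined with the character shifts from Lemma~\ref{antisym}(ii), genuinely cover the full $W$-orbit. Should this prove awkward, a clean fallback is to use Osborne's character formula together with the classical vanishing of $\Theta_{\Gamma(X, \mathcal I(Q, \tau))}$ on $T_0'$ for standard modules attached to non-compact Cartan subgroups.
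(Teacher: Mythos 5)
Your proposal takes a genuinely different route from the paper's. The paper's proof is a direct $\mathcal D$-module argument: since $Q$ is not closed, none of the points $x_w$, $w \in W$, lie in $Q$, so base change for the direct image $i_{Q,+}(\tau)$ gives $i^!_{x_w}(\mathcal I(Q,\tau)) = 0$, hence $LT_{x_w}(\mathcal I(Q,\tau)) = 0$, hence $H_p(\mathfrak n_w, \Gamma(X,\mathcal I(Q,\tau)))_{(\lambda+\rho)} = 0$ for all $p$ and all $w$. Since (for regular $\lambda$) the Lie algebra homology decomposes over the $(w\lambda+\rho)$-weight spaces, all components vanish; coherent continuation then handles singular $\lambda$. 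You instead work entirely at the level of pairings, deducing the vanishing of the Euler characteristic from the vanishing of the homological pairing against enough test objects, using Frobenius reciprocity, the HMSW duality theorem, and the antiequivalence $\check{\ }$. This avoids base change and is appealingly formal.

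However, the weight bookkeeping that you flag as the "principal obstacle'' is not a cosmetic worry; as stated your argument appears to have a sign/infinitesimal-character mismatch. You take $\tau'$ on the closed orbit $Q'_w$ compatible with $\lambda+\rho$, so $\Gamma(X,\mathcal I(Q'_w,\tau'))\check{\ }$ has infinitesimal character in the orbit $W(-\lambda)$, while $\Gamma(X,\mathcal I(Q,\tau))$ has infinitesimal character in $W\lambda$. By Wigner's lemma the pairing $\langle \Gamma(X,\mathcal I(Q,\tau)) \mid \Gamma(X,\mathcal I(Q'_w,\tau'))\check{\ }\rangle$ vanishes for the trivial reason that the infinitesimal characters disagree (unless $-\lambda \in W\lambda$), and correspondingly the $T$-weight of $V = V'\check{\ } \otimes T_{x_w}(\Omega_X)$ is (specialization of) $-\lambda + \rho$, which is generically disjoint from the support $\{w'\lambda + \rho : w' \in W\}$ of $\chi(H_\bullet(\mathfrak n_w, \Gamma(X,\mathcal I(Q,\tau))))$. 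So the vanishing you obtain is vacuous and detects nothing. To repair this one would need to take $\tau'$ compatible with $-\lambda+\rho$ (equivalently, build the test modules so that their duals land in $\mathcal A(\mathcal U_\theta, K)$), and then still verify carefully---via the component group of $T$ and the specializations at the various $x_w$---that the weights tested do exhaust the entire support of the Euler characteristic. Your fallback (classical vanishing of characters of standard modules on the compact Cartan) is plausible, but note it is not independent of the paper's argument: the paper \emph{deduces} exactly that vanishing from the base change step, and an independent reference would require invoking the dictionary between $\mathcal D$-module standard modules and Langlands standard representations plus the Harish-Chandra/Hirai character theory on the elliptic set, none of which you have supplied. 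As written, the proof has a real gap.
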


\begin{proof}
Assume first that $\lambda$ is regular. Since $Q$ is not closed, the
points $x_w$, $ w \in W$, are not in $Q$.
  
Let $i_x : \{x\} \longrightarrow X$ and $i_Q : Q \longrightarrow X$ be
the natural inclusions. Since the standard Harish-Chandra sheaf $\mathcal
 I(Q,\tau)$ is the $D$-module direct image $i_{Q,+}(\tau)$, by the base
 change \cite[Ch.~VI, 8.5]{borel}, we see that $i_{x_w}^!(\mathcal
 I(Q,\tau)) = 0$, i.e., $LT_{x_w}(\mathcal I(Q,\tau)) = 0$.
By the above discussion, this implies that
$$
H_p(\mathfrak n_w, \Gamma(X,\mathcal I(Q,\tau)))_{(\lambda+\rho)} = 0
$$
for all $p \in \mathbb Z_+$. Hence, we have
$$
\sum_{p \in \mathbb Z}(-1)^p [H_p(\mathfrak n_w,
  \Gamma(X,\mathcal I(Q,\tau)))_{(\lambda+\rho)}] = 0
$$
for all $w \in W$. As we remarked, this immediately implies our
statement for regular $\lambda$. In particular the character of this
standard Harish-Chandra module vanishes on $T_0'$. Since coherent
continuation corresponds to twisting the localization by sections of a
homogeneous line bundle on $X$ followed by taking global sections
\cite[Ch.~3, Thm. 7.7]{book}, it follows that the character vanishes
on $T_0'$ also for singular $\lambda$. This in turn implies the
statement in general.
\end{proof}

Now we treat the case of closed orbits. We can pick $\mathfrak n$ so
that the corresponding point $x$ in the flag variety is in $Q$.
Denote by $j_x :\{x\} \longrightarrow Q$ the natural inclusion. Then
the geometric fiber $V = T_x(\tau)$ is an irreducible module in
$\mathcal M(\mathfrak t,T)$.

Since $Q$ is closed, $\mathfrak b \cap \mathfrak k$ is a Borel
subalgebra in $\mathfrak k$ and $s = \frac 1 2 \dim(K/T) = \dim Q$.

\begin{lem}
  \label{stdchar}
Assume that $Q$ is a closed orbit. Then we have
$$
\sum_{p \in \mathbb Z} (-1)^p [H_p (\mathfrak n,\Gamma(X,\mathcal I(Q,\tau)))]
= (-1)^{s + \Card R^+} \sum_{w \in W_0} \epsilon(w)
  [V]^w e^{\rho - w\rho}.
$$
\end{lem}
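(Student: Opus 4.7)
The plan is to follow the geometric approach of Lemma~\ref{stdvan}: treat regular antidominant $\lambda$ first, then extend to singular $\lambda$ via the same coherent continuation device, since tensoring with sections of a homogeneous line bundle on $X$ preserves the identity on $T_0'$.

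For regular $\lambda$, write $U=\Gamma(X,\mathcal I(Q,\tau))$ and $A:=\sum_p(-1)^p[H_p(\mathfrak n,U)]\in \K(\mathfrak t,T)$. Since $Q$ is closed, $\mathcal I(Q,\tau)=i_{Q,+}(\tau)$ with $i_Q\colon Q\hookrightarrow X$ closed of codimension $d:=\Card R^+-s=\Card R_n^+$. For each $v\in W$ with $x_v\in Q$ I would factor $i_{x_v}=i_Q\circ j_v$; Kashiwara's theorem ($i_Q^!\,i_{Q,+}=\mathrm{id}$) then reduces the point-fiber to $j_v^!\tau=T_{x_v}(\tau)$, placed in cohomological degree $-d$ under the paper's $LT_{x_v}$ convention. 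Base change kills the fiber for $x_v\notin Q$ exactly as in~\ref{stdvan}, and in the equal-rank closed case $x_v\in Q$ iff $v\in W_0$ (we take $x=x_e\in Q$ and recall from \cite[5.9]{penrose} that $K$-orbits of such Borels correspond to right $W_0$-cosets). Hence
\[
LT_{x_v}(\mathcal I(Q,\tau))\;\cong\;\begin{cases} V^v\text{ concentrated in degree }-d, & v\in W_0,\\ 0, & v\notin W_0,\end{cases}
\]
with $V^v=T_{x_v}(\tau)$ the $N_K(T)$-transport of $V=T_x(\tau)$ along any lift of $v$. Passing through the Beilinson--Bernstein identification \cite[Ch.~3, Cor.~2.6]{book} invoked in the paper,
\[
\sum_p(-1)^p\bigl[H_p(\mathfrak n_v,U)_{(\lambda+\rho)}\bigr]\;=\;\begin{cases}(-1)^d[V]^v, & v\in W_0,\\ 0, & v\notin W_0.\end{cases}
\]

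I then combine this with Lemma~\ref{antisym}(ii), which gives $A=\epsilon(v)\,B_v\,e^{\rho-v\rho}$ in $\K(\mathfrak t,T)$ for every $v\in W$, where $B_v:=\sum_p(-1)^p[H_p(\mathfrak n_v,U)]$. Project onto the weight $\iota_x(v\lambda+\rho)\in\mathfrak t^*$: the only $B_v$-component contributing is the $(\lambda+\rho)$-component, of weight $\iota_{x_v}(\lambda+\rho)=\iota_x(v(\lambda+\rho))$, because the factor $e^{\rho-v\rho}$ shifts by $\iota_x(\rho-v\rho)$, precisely hitting $\iota_x(v\lambda+\rho)$. This gives $A^{(\iota_x(v\lambda+\rho))}=\epsilon(v)(-1)^d[V]^v\,e^{\rho-v\rho}$ for $v\in W_0$ and zero otherwise. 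By Casselman--Osborne these exhaust the support of $A$ for regular $\lambda$, and summing over $v\in W_0$---together with the parity $(-1)^d=(-1)^{s+\Card R^+}$ arising from $s+\Card R^+=2\Card R_c^++\Card R_n^+\equiv d\pmod 2$---yields the claimed formula. Singular $\lambda$ is handled verbatim as in~\ref{stdvan}.

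The main obstacle is the $\mathcal D$-module bookkeeping in the first step: pinning down that the derived geometric fiber at $x_v\in Q$ sits in cohomological degree exactly $-d$ with value $V^v$ and carries no additional conormal-determinant twist, because the decisive sign $(-1)^d$---and hence the prefactor $(-1)^{s+\Card R^+}$---comes entirely from this concentration. Once this is in place the remainder is a formal manipulation in $\K(\mathfrak t,T)$ using Lemma~\ref{antisym}(i,ii) and the regularity-driven weight decomposition.
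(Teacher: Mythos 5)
Your argument is correct and follows essentially the same route as the paper: regular $\lambda$ first, base change (equivalently Kashiwara) to compute the derived point-fibers $LT_{x_v}(\mathcal I(Q,\tau))$ concentrated in degree $-\codim Q$, translation to $(\lambda+\rho)$-components of $\mathfrak n_v$-homology via \cite[Ch.~3, Cor.~2.6]{book}, assembly by Lemma~\ref{antisym}, and the same coherent-continuation reduction to singular $\lambda$. The only cosmetic difference is that you compute $T_{x_v}(\tau)\cong V^v$ for every $v\in W_0$ and use \ref{antisym}(ii) to assemble, whereas the paper computes only at $x=x_e$ and propagates to the other $W_0$-components with \ref{antisym}(i); these are equivalent.
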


\begin{proof}
  Assume first that $\lambda$ is regular. By our assumption, the
  points $x_w$ are in $Q$ if and only if $w \in W_0$. As in the proof
  of \ref{stdvan} we conclude that $(\lambda + \rho)$-components of
  the Euler characteristic of Lie algebra homology for $\mathfrak n_w$
  (for the specialization of $\lambda + \rho$ at $x_w$) vanish for $w$
  outside $W_0$. This in turn implies that the $(w\lambda +
  \rho)$-components of Euler characteristic of Lie algebra homology
  for $\mathfrak n$ (for the specialization at $x$) vanish for $w$
  outside $W_0$.
  
Applying base change again \cite[Ch.~VI, 8.4]{borel}, we see that
 $$
 i_x^!(\mathcal I(Q,\tau)) = j_x^!(\tau) = T_x(\tau)[-\dim Q] = V[-\dim Q].
 $$
 Hence, we have $L T_x(\mathcal I(Q,\tau)) = D(V)[\codim Q]$. This
 immediately implies that
$$
  H_p(\mathfrak n,\Gamma(X,\mathcal I(Q,\tau)))_{(\lambda+\rho)} =
  \begin{cases}
    V, & \text{ if } p = \codim Q; \\
    0, & \text{ if } p \ne \codim Q.
    \end{cases}
$$
  
As we mentioned above, we have $\codim Q = \dim X - s$. Therefore, we
see that
$$
\sum_{p=0}^n  (-1)^p [H_p(\mathfrak n,\Gamma(X,\mathcal I(Q,\tau)))_{(\lambda+\rho)}]
 = (-1)^{\codim Q} [V] = (-1)^{s + \Card R^+} [V].
 $$
 The $(w \lambda + \rho)$-components of Euler characteristic of
 Lie algebra homology for $\mathfrak n$, for $w \in W_0$, are
 uniquely determined by \ref{antisym}.(i), i.e., we have
 $$
 \sum_{p=0}^n  (-1)^p [H_p(\mathfrak n,\Gamma(X,\mathcal I(Q,\tau)))]
 = (-1)^{s + \Card R^+} \sum_{w \in W_0} \epsilon(w) [V]^w e^{\rho - w \rho}.
 $$
This completes the proof for regular $\lambda$.

The reduction of the general case to the case of regular $\lambda$ is
the same as in the proof of \ref{stdvan}.
\end{proof}

\subsection{Homological pairing in the equal rank case}
As we remarked above the homological pairing $\langle [U] \mid
[\Gamma(X,\mathcal I(Q,\tau))\check{\ }] \rangle_{(\mathfrak g,K)}$
could be nonzero only if the second variable is a class attached to a
closed orbit $Q$.

Going back to our calculation of homological pairing in this
situation, by \eqref{calc1}, we have
\begin{multline}
  \label{calc2}
\langle [U] \mid [\Gamma(X,\mathcal I(Q,\tau)\check{\ }] \rangle_{(\mathfrak
  g,K)}
= \sum_{ p \in \mathbb Z} (-1)^{s+p} \langle [H_p(\mathfrak
  n,U)] \mid [V\check{\ } \otimes T_x(\Omega_X)]\rangle_{(\mathfrak t,T)} \\
= \sum_{ p \in \mathbb Z} (-1)^{s+p} \langle [H_p(\mathfrak
  n,U)] \mid [V\check{\ }] \cdot e^{2\rho}\rangle_{(\mathfrak t,T)}
\end{multline}
  
Now we want to rewrite the right side of \eqref{calc2} in a more
symmetric form. First, we have
$$
\langle [H_p(\mathfrak n,U)] \mid [V \check{\ }] \cdot e^{2\rho}
\rangle_{(\mathfrak t,T)} =
\langle [H_p(\mathfrak n,U)]^w \mid [V \check{\ }]^w
  \cdot e^{2w\rho} \rangle_{(\mathfrak t,T)}
$$
for any $w \in W_0$.

Hence, by summing over the group $W_0$, we get
$$
\langle [H_p(\mathfrak
  n,U)] \mid [V\check{\ }] \cdot e^{2\rho}\rangle_{(\mathfrak t,T)}
= \frac 1 {[W_0]} \sum_{w \in W_0} \langle [H_p(\mathfrak n,U)]^w \mid
[V\check{\ }]^w \cdot e^{2w\rho} \rangle_{(\mathfrak t,T)}.
$$

This implies, by \ref{antisym}.(i) and \eqref{calc2}, that
\begin{multline}
  \label{calc3}
\langle [U] \mid [\Gamma(X,\mathcal I(Q,\tau)\check{\ }] \rangle_{(\mathfrak
  g,K)} = \sum_{ p \in \mathbb Z} (-1)^{s+p} \langle [H_p(\mathfrak
  n,U)] \mid [V\check{\ }] \cdot e^{2\rho}\rangle_{(\mathfrak t,T)} \\ =
\sum_{ p \in \mathbb Z} (-1)^{s+p} \left( \frac 1 {[W_0]} \sum_{w \in W_0}
\big\langle [H_p(\mathfrak n,U)]^w \mid [V\check{\ }]^w \cdot e^{2w\rho} \big\rangle_{(\mathfrak t,T)} \right) \\
=
\frac {(-1)^s} {[W_0]}  \sum_{w \in W_0} \left( \sum_{ p \in \mathbb Z} (-1)^p
\big\langle [H_p(\mathfrak n,U)]^w \mid [V\check{\ }]^w \cdot e^{2w\rho} \big\rangle_{(\mathfrak t,T)} \right) \\
=
\frac {(-1)^s} {[W_0]} \sum_{w \in W_0} \Biggl\langle w \left(\sum_{ p \in \mathbb Z} (-1)^p [H_p(\mathfrak n,U)]\right) \Biggm|  [V\check{\ }]^w \cdot e^{2w\rho} \Biggr\rangle_{(\mathfrak t,T)} \\
=
\frac {(-1)^s} {[W_0]} \sum_{w \in W_0} \epsilon(w) \Biggl\langle \sum_{ p \in \mathbb Z} (-1)^p [H_p(\mathfrak n,U)] \cdot e^{w \rho - \rho} \Biggm| [V\check{\ }]^w \cdot e^{2w\rho}
\Biggr\rangle_{(\mathfrak t,T)} \\
= \frac {(-1)^s} {[W_0]}  \Biggr\langle \sum_{ p \in \mathbb Z} (-1)^p [H_p(\mathfrak n,U)] \Biggm| \sum_{w \in W_0} \epsilon(w) [V\check{\ }]^w \cdot  e^{\rho + w\rho} \Biggl\rangle_{(\mathfrak t,T)} .
\end{multline}

To complete our calculation, we need a representation theoretic
interpretation of the second sum in the above pairing.

The character of the dual representation
$\Gamma(X,\mathcal I(Q,\tau))\check{\ }$ satisfies
$\Theta_{\Gamma(X,\mathcal I(Q,\tau))\check{\ }} (t) = \Theta_{\Gamma(X,\mathcal I(Q,\tau))}(t^{-1})$ for any $t \in T_0'$.
Hence, by \ref{stdchar} and the Osborne conjecture, we have
\begin{multline*}
  \Theta_{\Gamma(X,\mathcal I(Q,\tau))\check{\ }} = (-1)^{s + \Card R^+}
  \frac{ \sum_{w \in W_0} \epsilon(w)
    \Theta_{[V\check{\ }]^w} e^{w\rho - \rho}} {\prod_{\alpha \in R^+} (1 - e^{-\alpha})} \\
= (-1)^s
  \frac{ \sum_{w \in W_0} \epsilon(w)
    \Theta_{[V\check{\ }]^w} e^{w\rho + \rho}} {\prod_{\alpha \in R^+} (1 - e^{\alpha})}
\end{multline*}
on $T_0'$. Moreover, using the Osborne conjecture again, we have
$$
  \sum_{p=0}^n (-1)^p [H_p(\mathfrak n, \Gamma(X,\mathcal I(Q,\tau))\check{\ })] =
  (-1)^s \sum_{w \in W_0} \epsilon(w) [V\check{\ }]^w \cdot e^{\rho + w\rho}.
$$

Plugging this into \eqref{calc3}, for any Harish-Chandra module $U$, we
finally get
\begin{multline}
  \label{calc4}
  \langle [U] \mid [\Gamma(X,\mathcal I(Q,\tau))\check{\ }]
  \rangle_{(\mathfrak g,K)} \\
  = \frac 1 {[W_0]} \sum_{ p \in \mathbb Z} (-1)^{p+q}
  \langle H_p(\mathfrak n,U) \mid
  H_q(\mathfrak n,\Gamma(X,\mathcal I(Q,\tau))\check{\ })\rangle_{(\mathfrak t,T)}.
\end{multline}

In the calculation leading to \eqref{calc4} the choice of the Lie
algebra $\mathfrak n$ (or equivalently set of positive roots $R^+$)
was specific for the orbit $Q$. On the other hand, the symmetry
established in \ref{antisym}.(ii), implies that the formula holds for
any set of positive roots $wR^+$ in $R$. Therefore, the formula holds
for any $\mathfrak n$ attached to the Cartan subalgebra $\mathfrak t$.

This finally leads to the following result which completely determines
the homological pairing on $\K (\mathfrak g, K)$.

\begin{thm}
  \label{homo}
  Let $\mathfrak t$ be a Cartan subalgebra of $\mathfrak g$ contained
  in $\mathfrak k$ and $R^+$ a set of positive roots in the root
  system $R$ of $(\mathfrak g,\mathfrak t)$. Let $\mathfrak n$ be the
  Lie algebra spanned by root subspaces corresponding to $R^+$. Then
  we have
  $$
  \langle [U] \mid [U'] \rangle_{(\mathfrak g,K)} = \frac 1 {[W_0]} \sum_{ p,q \in \mathbb Z}
  (-1)^{p+q} \dim \Hom_{(\mathfrak t,T)}(H_p(\mathfrak n,U),H_q(\mathfrak n,U')) 
  $$
  for any two Harish-Chandra modules $U$ and $U'$.
\end{thm}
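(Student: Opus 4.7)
The plan is to exploit biadditivity and reduce to the case where $U'$ is the dual of a standard Harish-Chandra module. First, since $\mathcal A(\mathfrak t,T)$ is semisimple, $\Ext^p_{(\mathfrak t,T)}=0$ for $p>0$, so the homological pairing on $\K(\mathfrak t,T)$ coincides with $(A,B)\mapsto \dim \Hom_{(\mathfrak t,T)}(A,B)$. The right-hand side of the claimed identity may therefore be rewritten as
$$
\frac{1}{[W_0]}\,\bigl\langle \chi(\mathfrak n,U)\bigm|\chi(\mathfrak n,U')\bigr\rangle_{(\mathfrak t,T)},\qquad \chi(\mathfrak n,M):=\sum_p(-1)^p[H_p(\mathfrak n,M)].
$$
Both sides of the theorem are biadditive, and by the geometric classification recalled earlier, the classes $[\Gamma(X,\mathcal I(Q,\tau))\check{\ }]$ for all geometric data $(Q,\tau)$ generate $\K(\mathfrak g,K)$. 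Hence it is enough to verify the identity for $U'=\Gamma(X,\mathcal I(Q,\tau))\check{\ }$.

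For the closed-orbit case, I pick $\mathfrak n$ so that the associated point $x$ of the flag variety lies in $Q$. Equation \eqref{calc4}, which was derived from Frobenius reciprocity (Proposition \ref{fro-pairing}), the duality theorem of \cite{hmsw1}, Lemma \ref{stdchar}, and the Osborne conjecture, is then precisely the required formula for this particular $\mathfrak n$. To extend it to an arbitrary positive root system $wR^+$ I invoke Lemma \ref{antisym}(ii), which gives $\chi(\mathfrak n,U)=\epsilon(w)\chi(\mathfrak n_w,U)\cdot e^{\rho-w\rho}$ (and similarly for $U'$). Multiplication by the unit $e^{\rho-w\rho}$ preserves the pairing on $\K(\mathfrak t,T)$ and the two factors of $\epsilon(w)$ cancel, so the right-hand side is independent of the choice of $R^+$ attached to $\mathfrak t$.

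For the non-closed orbit case, I show that both sides vanish. The left-hand side vanishes by the discussion following \eqref{calc1}: the $\sigma$-stable Cartan subalgebra attached to a non-closed $Q$ has $\mathfrak a\ne 0$, whence the homological pairing on $\K(\mathfrak c,T)$ vanishes by Lemma \ref{van}. For the right-hand side, Lemma \ref{stdvan} gives $\chi(\mathfrak n,\Gamma(X,\mathcal I(Q,\tau)))=0$; together with the Osborne character formula this forces $\Theta_{\Gamma(X,\mathcal I(Q,\tau))}$ to vanish on $T_0'$, and hence also $\Theta_{\Gamma(X,\mathcal I(Q,\tau))\check{\ }}(t)=\Theta_{\Gamma(X,\mathcal I(Q,\tau))}(t^{-1})=0$ on $T_0'$. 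Applying Osborne once more to the dual then yields $\chi(\mathfrak n,\Gamma(X,\mathcal I(Q,\tau))\check{\ })=0$, so the right-hand side vanishes as well.

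I do not expect any single step of the remaining argument to be hard: the heavy lifting (the geometric duality of \cite{hmsw1}, the Osborne conjecture, and Beilinson–Bernstein style localization) has already been absorbed into \eqref{calc4} and Lemmas \ref{stdvan}–\ref{stdchar}. The chief bookkeeping subtlety is the passage from \eqref{calc4}, whose statement depends on choosing $\mathfrak n$ compatibly with $Q$, to the orbit-independent formulation of Theorem \ref{homo}; this is precisely the point where the invariance of the $\K(\mathfrak t,T)$-pairing under multiplication by the characters $e^\mu$, together with Lemma \ref{antisym}(ii), is essential.
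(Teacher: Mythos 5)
Your proposal is correct and follows essentially the same route as the paper: reduce by biadditivity to $U' = \Gamma(X,\mathcal I(Q,\tau))\check{\ }$, settle the closed-orbit case via \eqref{calc4}, settle the non-closed case by showing both sides vanish (left via Lemma \ref{van} applied through \eqref{calc1}, right via Lemma \ref{stdvan} plus Osborne for the dual), and remove the dependence on the particular $R^+$ used in \eqref{calc4} via Lemma \ref{antisym}(ii) and invariance of the $\K(\mathfrak t,T)$-pairing under twisting by $e^{\rho-w\rho}$. The only cosmetic difference is that the paper establishes the independence of $R^+$ in the paragraph preceding the theorem rather than inside the proof.
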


\begin{proof}
  To establish the above formula, it is enough to check it on duals
  $\Gamma(X,\mathcal I(Q,\tau))\check{\ }$ of standard Harish-Chandra
  modules in the second variable for any geometric data $(Q,\tau)$.

If $Q$ is closed, the formula is established in \eqref{calc4}.

Otherwise, if $Q$ is not closed, by \ref{stdvan} and the Osborne
conjecture, the character of the standard module $\Gamma(X,\mathcal
I(Q,\tau))$ vanishes on $T_0'$. This in turn implies that the
character of its dual vanishes on $T_0'$. Hence, we have
  $$
  \sum_{p \in \mathbb Z} (-1)^p [H_p(\mathfrak n,
    \Gamma(X,\mathcal I(Q,\tau))\check{\ })] = 0
  $$ and the right side of the above formula vanishes. Since a
  $\sigma$-stable Cartan subalgebra attached to $Q$ has $\mathfrak a
  \ne \{0\}$, the left side vanishes by \eqref{calc1} (as we already
  remarked).
\end{proof}

\section{Proof of the analogue of Kazhdan's conjecture}
\subsection{Equality of two pairings}
Finally, we prove our main result.

\begin{thm}
The elliptic and homological pairing on $\K (\mathfrak g,K)$ agree.
\end{thm}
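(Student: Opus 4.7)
The plan is to split into two cases according to whether the split rank of $G_0$ is zero, and in the equal rank case, to use Theorem \ref{homo} (which already gives a clean homological formula on the compact Cartan) together with the Osborne character formula to rewrite the elliptic pairing in the same shape.

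If $\rank G_0 > \rank K_0$, the elliptic pairing has been defined to be zero, while Theorem \ref{van2} asserts that the homological pairing vanishes identically on $\K(\mathfrak g,K)$. So both sides agree trivially and nothing needs to be done.

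Assume now $\rank G_0 = \rank K_0$ and fix a compact Cartan subgroup $T_0 \subset K_0$, a positive system $R^+$, and $\mathfrak n$ as in Theorem \ref{homo}. For any Harish-Chandra module $U$, Lemma \ref{fin1} says that $H_p(\mathfrak n, U)$ is finite dimensional, so $\Theta_{H_p(\mathfrak n,U)}$ is a finite integer combination of characters $e^\mu$ of $T_0$. The Osborne conjecture, as recalled before Lemma \ref{weyldenom}, gives
$$
\Psi_{[U]}(t) = \prod_{\alpha\in R^+}(1-e^\alpha(t))\,\Theta_{[U]}(t) = \sum_{p\in\mathbb Z}(-1)^p \Theta_{H_p(\mathfrak n,U)}(t)
$$
on $T_0'$, and since both sides extend to real analytic functions on $T_0$, the identity holds throughout $T_0$. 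I would then plug this into the definition of the elliptic pairing, obtaining
$$
\langle [U]\mid [U']\rangle_{ell} = \frac{1}{[W_0]}\sum_{p,q\in\mathbb Z}(-1)^{p+q}\int_{T_0}\Theta_{H_p(\mathfrak n,U)}(t)\,\overline{\Theta_{H_q(\mathfrak n,U')}(t)}\,d\mu_{T_0}(t).
$$
Schur orthogonality for the compact torus $T_0$ (i.e.\ orthonormality of the characters $e^\mu$ with respect to $\mu_{T_0}$) identifies the integral with $\dim\Hom_{(\mathfrak t,T)}(H_p(\mathfrak n,U),H_q(\mathfrak n,U'))$, and this is exactly the expression appearing on the right hand side of Theorem \ref{homo}.

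The main substantive input is the Osborne conjecture, but in the form we need it is already invoked and cited in the excerpt. The only real points requiring care are the justification that $\Psi_{[U]}$ equals the alternating sum of homology characters everywhere on $T_0$ (not just on $T_0'$), which follows from real analyticity of both sides, and the identification of the inner product of two finite-dimensional $T$-characters with $\dim\Hom_{(\mathfrak t,T)}$ of the corresponding modules. Once these are in place, the equality of the two pairings on all of $\K(\mathfrak g,K)$ is an immediate consequence of Theorem \ref{homo}.
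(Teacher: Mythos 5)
Your proposal is correct and uses exactly the same ingredients as the paper's proof (the unequal-rank case via Theorem \ref{van2}, then Theorem \ref{homo}, Schur orthogonality for $T_0$, and the Osborne character formula), merely running the chain of equalities from the elliptic pairing to the homological pairing rather than the other way around. The small points you flag for care (analytic continuation of the Osborne identity from $T_0'$ to $T_0$, and identifying the integral of a product of characters with $\dim\Hom_{(\mathfrak t,T)}$) are indeed the only things to check and are handled implicitly in the paper.
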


\begin{proof}
  In the case of nonequal rank, the elliptic pairing on $\K(\mathfrak
  g,K)$ is zero by definition. The homological pairing is zero by \ref{van2}.

 Assume that the $\rank G_0 = \rank K_0$. Then, by \ref{homo} and the
 orthogonality relations for the compact group $T_0$, for any two
 Harish-Chandra modules $U$ and $U'$ we have
 $$
 \langle [U] \mid [U'] \rangle_{(\mathfrak g,K)} = \frac 1 {[W_0]} \sum_{ p,q \in \mathbb Z}
 (-1)^{p+q} \int_{T_0} \Theta_{H_p(\mathfrak n,U)}(t)  \overline{\Theta_{H_q(\mathfrak n,U')}(t)} \, d\mu_{T_0}(t).
 $$
 By the Osborne conjecture, we see that
 $$
 \langle [U] \mid [U'] \rangle_{(\mathfrak g,K)} = \frac 1 {[W_0]} \int_{T_0} \Psi_U(t)  \overline{\Psi_{U'}(t)} \, d\mu_{T_0}(t) =  \langle [U] \mid [U'] \rangle_{ell}.
 $$
\end{proof}

\bibliographystyle{amsplain}
\newcommand{\noopsort}[1]{}
\providecommand{\bysame}{\leavevmode\hbox to3em{\hrulefill}\thinspace}
\providecommand{\MR}{\relax\ifhmode\unskip\space\fi MR }
% \MRhref is called by the amsart/book/proc definition of \MR.
\providecommand{\MRhref}[2]{%
  \href{http://www.ams.org/mathscinet-getitem?mr=#1}{#2}
}
\providecommand{\href}[2]{#2}

\end{document}